\documentclass[12pt]{amsart}

\usepackage{graphicx}
\usepackage[english]{babel}
\usepackage{graphicx}
\usepackage{framed}
\usepackage[normalem]{ulem}
\usepackage{amsmath}
\usepackage{amsthm}
\usepackage{amssymb}
\usepackage[bookmarks=true, bookmarksopen=true, bookmarksdepth=3,bookmarksopenlevel=2, colorlinks=true, linkcolor=blue, citecolor=blue, filecolor=blue, menucolor=blue, urlcolor=blue]{hyperref}
\usepackage[capitalize]{cleveref}
\usepackage{comment}
\usepackage{tikz}
\usetikzlibrary{matrix,calc}
\usepackage{mathtools}
\usepackage{tikz-cd}
\usepackage{amsfonts}
\usepackage{enumerate}
\usepackage[utf8]{inputenc}
\usepackage[margin=1in,marginparwidth=0.8in, marginparsep=0.1in]{geometry}
 % changes page formatting
% \usepackage{undertilde}
\usepackage{stmaryrd}
\usepackage{bbm}
       
\usepackage{tikz}
\usetikzlibrary{3d,arrows,calc,positioning,decorations.pathreplacing,matrix} %,arrows.meta}
\newcommand{\arrtip}{latex'}

    \usepackage[colorinlistoftodos]{todonotes}
    \usepackage[all]{xy}
    \usepackage{mathrsfs}

    %BB Labels
     %reals
    \newcommand{\C}{\mathbb{C}} %complex numbers
    \newcommand{\N}{\mathbb{N}} %natural numbers
     %rational numbers
     %integers
     %Projective Space
     %Affine space
    \newcommand{\G}{\mathbb{G}} %G_m, G_a
    
     %langle
     %rangle
    
    %Lie Groups and Algebras
    
    \newcommand{\SL}{\operatorname{SL}}
     \newcommand{\GL}{\operatorname{GL}}

    \newcommand{\LT}{\mathfrak{t}}

    %Greek Alphabet

    %Operator Names
     %Aut
     %End
    \newcommand{\id}{\operatorname{id}} %id
     %Id
    \newcommand{\Hom}{\operatorname{Hom}}
    %Hom
    
    %Ad
    
    %ad
    \renewcommand{\Hom}{\operatorname{Hom}} %Hom
     %Underlined Hom
     %Hom
     %Underlined End
    
    \newcommand{\IndCoh}{\operatorname{IndCoh}}
    
    %Etc Labels
    %\newcommand{\C}{\mathcal{C}} %Category
     %Category O
     %Sheaf
    \newcommand{\D}{\mathcal{D}} %D-modules, etc. 
     %sheaf (extension?) 
    \renewcommand{\\}{\backslash}
    \renewcommand{\subset}{\subseteq}
    
    \numberwithin{equation}{section}
    \newtheorem{Theorem}[equation]{Theorem}
    \newtheorem{Proposition}[equation]{Proposition}

    \newtheorem{Corollary}[equation]{Corollary}

    \theoremstyle{definition}
    \newtheorem{Remark}[equation]{Remark}

    \numberwithin{figure}{section}

    \title[Differential operators on $SL_n/U$ and quantized Coulomb branches]{Differential operators on the base affine space of $SL_n$ and quantized Coulomb branches}
    %\title{The Universal Hyperk\"ahler Implosion is a Coulomb Branch - Draft}
    %\title{\overline{T^*(G/N)} Has Symplectic Singularities}
    %\author{Tom Gannon and Harold Williams}
    %\date{May 2020}
    
    \author[Tom Gannon]{Tom Gannon}
    \address[Tom Gannon]{University of California, Los Angeles \\ Los Angeles CA, USA}
    \email{gannonth@math.ucla.edu}
    
    \author[Harold Williams]{Harold Williams}
    \address[Harold Williams]{University of Southern California \\ Los Angeles CA, USA}
    \email{hwilliams@usc.edu}

    %Commands Just for this document: 

     %Sheaf
     %Sheaf

    \newcommand{\AvN}{\text{Av}_*^N}
    \newcommand{\Avpsi}{\text{Av}_{!}^{\psi}}

    \newcommand{\LTd}{\LT^{\ast}}

%% code from mathabx.sty and mathabx.dcl
\DeclareFontFamily{U}{mathx}{\hyphenchar\font45}
\DeclareFontShape{U}{mathx}{m}{n}{
	<5> <6> <7> <8> <9> <10>
	<10.95> <12> <14.4> <17.28> <20.74> <24.88>
	mathx10
}{}
\DeclareSymbolFont{mathx}{U}{mathx}{m}{n}
\DeclareFontSubstitution{U}{mathx}{m}{n}
\DeclareMathAccent{\widecheck}{0}{mathx}{"71}

\newcommand{\Sp}{\mathrm{Sp}}
\DeclareMathSymbol{\shortminus}{\mathbin}{AMSa}{"39}
\DeclareMathSymbol{-}{\mathbin}{AMSa}{"39}
    
    \begin{document}

\newcommand{\AvNTw}{\text{Av}_*^{N, (T, w)}}
\newcommand{\AvGw}{\text{Av}_{\ast}^{G,w}}
\newcommand{\pifin}{\pi_{\text{fin}}}
\newcommand{\pifinL}{\pi_{\text{fin,L}}}

    \newcommand{\ELeftAdjoint}{\text{ev}_{\omega_{\LTd}}}
\newcommand{\ClGlobalDiffOp}{\text{H}^0\Gamma(\mathcal{D}_{G/N})}
\newcommand{\GlobalDiffOp}{\Gamma(\mathcal{D}_{G/N})}
\newcommand{\indsch}{\mathcal{X}}

    \newcommand{\DGCatContk}{\text{DGCat}^k_{\text{cont}}}
\newcommand{\DGCatContL}{\text{DGCat}^L_{\text{cont}}}

\newcommand{\DNTw}{\mathcal{D}(N\backslash G/N)^{T_r,w}}
\newcommand{\DNTwWhit}{\mathcal{D}(N^-_{\psi}\backslash G/N)^{T_r,w}}
\newcommand{\DNWhit}{\mathcal{D}(N^-_{\psi}\backslash G/N)}
\newcommand{\DNTwldeg}{\mathcal{D}(N \backslash G/N)^{T_r,w}_{\text{left-deg}}}
\newcommand{\DNTwnondeg}{\mathcal{D}(N \backslash G/N)^{T_r,w}_{\text{nondeg}}}
\newcommand{\DN}{\mathcal{D}(N\backslash G/N)}
\newcommand{\DNldeg}{\mathcal{D}(N \backslash G/N)_{\text{left-deg}}}
\newcommand{\DNnondeg}{\mathcal{D}(N \backslash G/N)_{\text{nondeg}}}
\newcommand{\DNlambda}{\mathcal{D}^{\lambda}(N \backslash G/B)}
\newcommand{\Dpsilambda}{\mathcal{D}^{\lambda}(N^- _{\psi}\backslash G/B)}
\newcommand{\DbiTw}{\mathcal{D}(N \backslash G/N)^{T \times T, w}}
\newcommand{\DbiTwnondeg}{\mathcal{D}(N \backslash G/N)^{T \times T, w}_{\text{nondeg}}}
\newcommand{\DbiTwnondegheart}{\mathcal{D}(N \backslash G/N)^{(T \times T, w), \heartsuit}_{\text{nondeg}}}
\newcommand{\DbiTwdeg}{\mathcal{D}(N \backslash G/N)^{T \times T, w}_{\text{deg}}}
\newcommand{\DNBlambda}{\mathcal{D}(N \backslash G/_{\lambda}B)}
\newcommand{\DNTwBlambda}{\mathcal{D}(N \backslash G/_{\lambda}B)}
\newcommand{\DWhitBlambda}{\mathcal{D}(N^-_{\psi}\backslash G/_{\lambda}B)}
\newcommand{\HN}{\D(N \backslash G/N)}
\newcommand{\HNTw}{\D(N \backslash G/N)^{T \times T, w}}
\newcommand{\HNTwabbreviated}{\mathcal{H}^{N, (T,w)}}
\renewcommand{\indsch}{\mathcal{X}}
\newcommand{\wdot}{\dot{w}}
\newcommand{\Gaminusalpha}{\mathbb{G}_a^{-\alpha}}
\newcommand{\Aone}{\mathbf{A}}
\newcommand{\Atwo}{\mathcal{L}\text{-mod}(\Aone)}
\newcommand{\algobj}{\mathcal{A}}
\newcommand{\newalgobj}{\mathcal{A}'}
\newcommand{\tilder}{\tilde{r}}
\newcommand{\Ccirc}{\mathring{\C}}
\newcommand{\rootlattice}{\mathbb{Z}\Phi}
\newcommand{\characterlatticeforT}{X^{\bullet}(T)}
\newcommand{\FourierMukai}{\text{FMuk}}
\newcommand{\oneshiftedCartierdual}{c_1}

\newcommand{\tildeV}{\tilde{\mathbb{V}}}
\newcommand{\Vdual}{V^{\vee}}
\newcommand{\generalstacktoGITquotientmap}{\phi}
\newcommand{\SpecofL}{\text{Spec}(L)}
\newcommand{\terminalmapfromC}{\alpha}
\newcommand{\terminalmapfromCmodassociatedstabilizer}{\dot{\alpha}}
\newcommand{\Hpsiliteral}{\mathcal{H}_{\psi}}
\newcommand{\AvNshifted}{\AvN[\text{dim}(N)]}
\newcommand{\hyperplanefixedbys}{V^{\ast}_{s = \text{id}}}
\newcommand{\fieldpossiblydifferentfromgroundfield}{K}
\newcommand{\Avpsishifted}{\Avpsi[-\text{dim}(N)]}
\newcommand{\FI}{F_{I}}
\newcommand{\FD}{F_{\mathcal{D}}}
\newcommand{\LI}{s_*^{\IndCoh}}
\newcommand{\LD}{\Avpsi}
\newcommand{\LIenh}{\pi_*^{\IndCoh, \text{enh}}}
\newcommand{\LDenh}{\text{Av}_!^{\psi, \text{enh}}}
\newcommand{\Ind}{\text{Ind}}
\newcommand{\Res}{\text{Res}}
\newcommand{\IndWithoutSignRep}{\text{Ind}(-)^W}
\newcommand{\ResWithoutSignRep}{\text{WRes}}
\newcommand{\IndWithSignRep}{\text{Ind}(- \otimes k_{\text{sign}})^W}
\newcommand{\ResWithSignRep}{\text{WRes}_s}
\newcommand{\parabolicrestrictionLIFTED}{\text{WRes}}
\newcommand{\V}{\mathcal{V}}
\newcommand{\parabolicrestriction}{\text{Res}}

\newcommand{\conjugacyclassofstandardLevis}{\underline{\Theta}}
\newcommand{\moduliOfEvenMonicPolynomialsOfTHISDEGREE}[1]{\mathcal{M}^{#1}}
\newcommand{\moduliOfALLEvenPolynomialsOfTHISDEGREE}[1]{\mathcal{P}^{#1}}
\newcommand{\moduliOfEvenMonicPolynomialsOfdegreeTWOwithMINUSSIGN}{\mathcal{M}_{-}^{2}}

\newcommand{\basedQuasimapsfromProjectiveLinetoAffineClosureofSLTWOwithIsoClassofTHISDEGREE}[1]{\text{Maps}_*^{#1}(\mathbb{P}^1, \overline{\SL_2/N}/T)}
\newcommand{\basedQuasimapsfromProjectiveLinetoAffineClosureofSLTWOwithISOtoTHISDEGREE}[1]{\text{Maps}_*^{#1, \simeq}(\mathbb{P}^1, \overline{\SL_2/N}/T)}
\newcommand{\oblv}{\text{oblv}}
\newcommand{\explicitIsoofBasedQMapsforSLTWOwithPolynomialsforMapsofTHISDEGREE}[1]{\Phi_{#1}}
\newcommand{\basedQuasimapsfromProjectiveLinetoAffineClosureofBORELwithIsoClassofTHISDEGREE}[1]{\text{Maps}_*^{#1}(\mathbb{P}^1, \overline{B/N}/T)}

\newcommand{\basedmapsFromProjectiveLinetoTHISSPACE}[1]{\text{Maps}_*(\mathbb{P}^1, #1)}

\newcommand{\resolutionOfSingularitiesSpaceForBasicAffineSpace}{G\mathop{\times}\limits^{B} E}

\newcommand{\affineClosureofBasicAffineSpace}{\overline{G/N}}
\newcommand{\affineClosureOfCotangentBundleofBasicAffineSpace}{\overline{T^*(G/N)}}
\newcommand{\smoothLocusOfAffineclosureofCotangentBundleOfBasicAffineSpace}{\overline{T^*(G/N)}_{\text{reg}}}
\newcommand{\SpecOfSymofSectionsOfTangentBundleOfBasicAffineSpace}{\text{Spec}(\text{Sym}_A^{\bullet}(\mathcal{T}_{\overline{G/N}}))}
\newcommand{\ringOfFunctionsForBasicAffineSpace}{A}
\newcommand{\ringOfFunctionsForCOTANGENTBUNDLEOfBasicAffineSpace}{R}
\newcommand{\tangentSheafForBasicAffineSpace}{\mathcal{T}_{G/N}}
\newcommand{\symOfDirectSumOfRepsofFundamentalWeights}{\text{Sym}(\oplus_i E(\omega_i))}
\newcommand{\Sym}{\text{Sym}}
\newcommand{\projectionFromAffineClosureofCotangentBundleToAffineClosureofSpace}{\overline{\pi}}
\newcommand{\momentMapFromAFFINECLOSUREofCotangentSpaceWithGROUPG}{\overline{\mu}_G}
\newcommand{\momentMapFromAFFINECLOSUREofCotangentSpacewithTOTALGROUP}{\overline{\mu}}
\newcommand{\groundfield}{k}
\newcommand{\LGd}{\mathfrak{g}^*}
\newcommand{\momentMapFromAFFINECLOSUREofCotangentSpaceWithGROUPT}{\overline{\mu}_T}

\newcommand{\LeviSubgroupofGLn}{M}
\newcommand{\borelOfLevi}{B_L}
\newcommand{\unipotentRadicalofLevi}{N_L}
\newcommand{\lieAlgebraofUnipotentRadicalofLevi}{\mathfrak{n}_L}
\newcommand{\universalPartialHyperkahlerImplosionwithParabolic}{G \mathop{\times}\limits^{\unipotentRadicalOfPARABOLICSUBGROUP} (\mathfrak{g}/\lieAlgebraOfUnipotentRadicalOfPARABOLICSUBGROUP)^*}
\newcommand{\rhocheck}{\rho^{\vee}}
\newcommand{\unipotentRadicalOfPARABOLICSUBGROUP}{U_P}
\newcommand{\lieAlgebraOfUnipotentRadicalOfPARABOLICSUBGROUP}{\mathfrak{u}_P}
\newcommand{\affinizationOfGrothendieckSpringerResolution}{\LGd \times_{\LTd\sslash W} \LTd}
\newcommand{\smoothLocusOfAffineclosureBasicAffineSpace}{\overline{G/N}^{\text{sm}}}

\newcommand{\ringOfFunctionsForFIBERPRODUCTofCotangentBundleofGModNOverLieTWithZero}{\overline{\ringOfFunctionsForCOTANGENTBUNDLEOfBasicAffineSpace}}
\newcommand{\affineClosureFIBERPRODUCTOfCotangentBundleofBasicAffineSpaceatZeroOverLT}{\affineClosureOfCotangentBundleofBasicAffineSpace \times_{\LTd} \{0\}}
\newcommand{\WhittakerHamiltonianReductionofG}{T^{\ast}(G/_{\psi}N^-)}
\newcommand{\WhittakerHamiltonianReductionofGBASECHANGEDtoLTd}{\WhittakerHamiltonianReductionofG \times_{\LTd\sslash W}\LTd}
\newcommand{\RingOfFunctionsForWhittakerHamiltonianReductionOfGBASECHANGEDtoLTd}{S}
\newcommand{\affineGrassmannianForLANGLANDSDUALgroup}{\text{Gr}_{G^{\vee}}}
\newcommand{\regularRepresentationPerverseSheafOnAffineGrassmannianForLANGLANDSDUALgroup}{\mathcal{A}_G}
\newcommand{\nilHeckeRingForW}{\mathscr{N}}

\newcommand{\GLnhat}{\widehat{\GL_n}}
\newcommand{\quotientOfGLnhatByDiagonalCentralSubgroup}{\widetilde{\GL_n}}
\newcommand{\DiagonalMatricesinGLn}{D}
\newcommand{\DiagonalMatricesHat}{\widehat{\DiagonalMatricesinGLn}}
\newcommand{\quotientOfDiagonalMatricesHatByDiagonalCentralSubgroup}{\tilde{\DiagonalMatricesinGLn}}
\newcommand{\affineClosureOfCotangentBundleofBasicAffineSpaceFORSLN}{\overline{T^*(\SL_n/U)}}

\newcommand{\pt}{\mathrm{pt}}

\newcommand{\Gr}{\mathrm{Gr}}
\newcommand{\hGr}{\widehat{\mathrm{Gr}}}
\newcommand{\conv}{{\mathbin{\scalebox{1.1}{$\mspace{1.5mu}*\mspace{1.5mu}$}}}}

\newcommand{\oldtighttimes}{\mathbin{\!\times\!}}
\newcommand{\tighttimes}{\mathrel{\mkern-5mu\times\mkern-5mu}}
\newcommand{\tbox}{\mathbin{\widetilde{\boxtimes}}}
\newcommand{\ol}{\overline}
\def\Spec{{\rm{Spec}}\,}

\newcommand{\hG}{\widehat{G}}
\newcommand{\hL}{\widehat{L}}
\newcommand{\hT}{\widehat{T}}
\newcommand{\wG}{\widetilde{G}}
\newcommand{\wL}{\widetilde{L}}
\newcommand{\wT}{\widetilde{T}}
\newcommand{\wW}{\widetilde{W}}
\newcommand{\wN}{\widetilde{N}}
\newcommand{\wi}{\widetilde{i}}
\newcommand{\wj}{\widetilde{j}}
\newcommand{\wft}{\widetilde{\mathfrak{t}}}
\newcommand{\wone}{\widetilde{1}}

\newcommand{\kk}{k}
\newcommand{\Shv}{\mathrm{Shv}}
\newcommand{\Sch}{\mathrm{Sch}}
\newcommand{\Schft}{\mathrm{Sch}_{ft}}
\newcommand{\PStk}{\mathrm{PStk}}
\newcommand{\Stk}{\mathrm{Stk}}
\newcommand{\op}{\mathrm{op}}
\newcommand{\indGStk}{\mathrm{indGStk}}
\newcommand{\GStk}{\mathrm{GStk}}
\newcommand{\catC}{{\mathscr{C}}}
\newcommand{\catD}{{\mathscr{D}}}
\newcommand{\catE}{{\mathscr{E}}}
\newcommand{\PrL}{\mathcal{P}\mathrm{r}^{\mathrm{L}}}
\newcommand{\PrR}{\mathcal{P}\mathrm{r}^{\mathrm{R}}}
\newcommand{\PrSt}{\mathcal{P}\mathrm{r}^{\mathrm{St}}}
\newcommand{\into}{\hookrightarrow}
\newcommand{\bul}{\bullet}
\newcommand{\Aff}{\mathrm{Aff}}
\newcommand{\Affft}{\mathrm{Aff_{ft}}}
\newcommand{\Corr}{\mathrm{Corr}}
\newcommand{\colim}{\mathrm{colim}}
\newcommand{\Catinfty}{\mathrm{Cat_\infty}}

% mathbb letters
\newcommand{\bR}{\mathbb{R}}
\newcommand{\bN}{\mathbb{N}}
\newcommand{\bZ}{\mathbb{Z}}
\newcommand{\bQ}{\mathbb{Q}}
\newcommand{\bC}{\mathbb{C}}

% caligraphic letters
\newcommand{\cA}{\mathcal{A}}
\newcommand{\cB}{\mathcal{B}}
\newcommand{\cC}{\mathcal{C}}
\newcommand{\cD}{\mathcal{D}}
\newcommand{\cE}{\mathcal{E}}
\newcommand{\cF}{\mathcal{F}}
\newcommand{\cG}{\mathcal{G}}
\newcommand{\cH}{\mathcal{H}}
\newcommand{\cI}{\mathcal{I}}
\newcommand{\cJ}{\mathcal{J}}
\newcommand{\cK}{\mathcal{K}}
\newcommand{\cL}{\mathcal{L}}
\newcommand{\cM}{\mathcal{M}}
\newcommand{\cN}{\mathcal{N}}
\newcommand{\cO}{\mathcal{O}}
\newcommand{\cP}{\mathcal{P}}
\newcommand{\cQ}{\mathcal{Q}}
\newcommand{\cR}{\mathcal{R}}
\newcommand{\cS}{\mathcal{S}}
\newcommand{\cT}{\mathcal{T}}
\newcommand{\cU}{\mathcal{U}}
\newcommand{\cV}{\mathcal{V}}
\newcommand{\cW}{\mathcal{W}}
\newcommand{\cX}{\mathcal{X}}
\newcommand{\cY}{\mathcal{Y}}
\newcommand{\cZ}{\mathcal{Z}}

% mathfrak letters
\newcommand{\fg}{\mathfrak{g}}
\newcommand{\fu}{\mathfrak{u}}
\newcommand{\fl}{\mathfrak{l}}
\newcommand{\ft}{\mathfrak{t}}

% Shortcuts for Greek letters
\newcommand{\al}{\alpha}
\newcommand{\be}{\beta}
\newcommand{\ga}{\gamma}
\newcommand{\Ga}{\Gamma}
\newcommand{\de}{\delta}
\newcommand{\De}{\Delta}
\newcommand{\ep}{\epsilon}
\newcommand{\ze}{\zeta}
\newcommand{\tht}{\theta}
\newcommand{\Th}{\Theta}
\newcommand{\io}{\iota}
\newcommand{\ka}{\kappa}
\newcommand{\la}{\lambda}
\newcommand{\La}{\Lambda}
\newcommand{\si}{\sigma}
\newcommand{\Si}{\Sigma}
\newcommand{\om}{\omega}
\newcommand{\Om}{\Omega}

\newcommand{\commutativeAlgebraObjectInEquivariantSatakeCategoryForHTwo}{\mathcal{A}}
\newcommand{\loopsOnH}{H_{\mathcal{K}}}
\newcommand{\arcsOnH}{H_{\mathcal{O}}}
\newcommand{\loopsOnHPRIME}{H'_{\mathcal{K}}}
\newcommand{\arcsOnHPRIME}{H'_{\mathcal{O}}}
\newcommand{\quotientMapFromGLnHatQuotientToPGLn}{q}
\newcommand{\quotientMapFromDiagonalMatricesHatQuotietnToTorusOfPGLn}{p}
\newcommand{\commutativeAlgebraSheafinQuotientofGLnhatSatakeCategoryforANQuiver}{\mathcal{A}_{\quotientOfGLnhatByDiagonalCentralSubgroup, V}}
\newcommand{\commutativeAlgebraSheafinQuotientofDIAGONALMATRICEShatforANQuiver}{\mathcal{A}_{\quotientOfDiagonalMatricesHatByDiagonalCentralSubgroup, V}}
\newcommand{\commutativeAlgebraSheafofGLnhatPushedForwardToBeInPGLnSatakeCategory}{\mathcal{A}}
\newcommand{\regularRepresentationSheafforinGrPGLn}{\mathcal{A}_R}
\newcommand{\maximalTorusForPGLn}{\overline{D}}
\newcommand{\wt}{\widetilde}
\newcommand{\sX}{\mathscr{X}}
\newcommand{\sY}{\mathscr{Y}}

\begin{abstract}
We show that the algebra $D_\hbar(SL_n/U)$ of differential operators on the base affine space of $SL_n$ is the quantized Coulomb branch of a certain 3d $\mathcal{N} = 4$ quiver gauge theory. In the semiclassical limit this proves a conjecture of Dancer-Hanany-Kirwan about the universal hyperk\"ahler implosion of $SL_n$. We also formulate and prove a generalization identifying the Hamiltonian reduction $T^* SL_n \sslash_{\psi} U$ as a Coulomb branch for an arbitrary unipotent character $\psi$. As an application of our results, we provide a new interpretation of the Gelfand-Graev symmetric group action on $D_\hbar(SL_n/U)$. 
\end{abstract}

\maketitle

\setcounter{tocdepth}{1}

\tableofcontents

\section{Introduction}\label{sec:intro}

\thispagestyle{empty}

Let $G$ be a semisimple complex algebraic group and $U$ a maximal unipotent subgroup. 
A prominent role in Lie theory is played by the base affine space $G/U$. In particular, the rich structure of its algebra $D(G/U)$ of differential operators was emphasized by Bernstein-Gelfand-Gelfand in \cite{BGG75}, and has been a topic of continuing interest to the present day \cite{BBP02,LS06,GR15,GK22}. The first goal of this paper is to show that when $G = SL_n$ this algebra (or more specifically, its asymptotic enhancement $D_\hbar(G/U)$) admits the following interpretation in terms of $3d$ $\cN=4$ gauge theory.

\begin{Theorem}\label{thm:mainthmintro}
There is an algebra isomorphism $D_\hbar(SL_n/U) \cong \C_\hbar[\cM_C(Q_n)]$, where the latter denotes the quantized Coulomb branch of the $3d$ $\cN=4$ quiver gauge theory associated to the quiver 
\begin{equation}\label{eq:quiverQn}
\begin{tikzpicture}[baseline=(current bounding box.center), thick, >=Stealth]
	% Draw vertices
	\node[circle, draw, minimum size=30] (1) at (0,0) {$1$};
	\node[circle, draw, minimum size=30] (2) at (2,0) {$2$};
	%\node (dots) at (4,0) {$\dots$};
	\fill (3.8,0) circle (.03);
	\fill (4,0) circle (.03);
	\fill (4.2,0) circle (.03);
	\fill (8,-0.4) circle (.03);
	\fill (8,-0.6) circle (.03);
	\fill (8,-0.8) circle (.03);
	\node[circle, draw, minimum size=30] (n-1) at (6,0) {$n - 1$};
	\node [circle, draw, minimum size=30] (v1) at (8,1.6) {$1$};
	\node [circle, draw, minimum size=30] (v2) at (8,0.4) {$1$};
	\node [circle, draw, minimum size=30] (v4) at (8,-1.6) {$1$};
	
	% Draw edges
	\draw (2) -- (1);
	\draw (2) -- (3.6,0);
	\draw (n-1) -- (4.4,0);
	\draw (n-1) -- (v1);
	\draw (n-1) -- (v2);
	\draw (n-1) -- (v4);
	
	% Draw brace and add text
	\draw[decorate, decoration={brace, amplitude=10pt}] (8.7, 2) -- (8.7, -2); 
	\node at (10.3,0) {$n$ vertices};
\end{tikzpicture}
\end{equation} and gauge group $\tilde{T}$ defined by (\ref{eq:Tgroupsquare}). 
	\end{Theorem}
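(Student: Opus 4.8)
The plan is to recognize both sides of the asserted isomorphism as the output of one and the same iterated quantum Hamiltonian reduction, the one governed by the Gelfand--Tsetlin chain $\GL_1 \subset \GL_2 \subset \cdots \subset \GL_{n-1} \subset \GL_n$: its first $n-1$ steps are the gauge nodes of $Q_n$, while its last step enters through the fan of $n$ rank-one nodes attached to the node $n-1$, which together form a copy of the maximal torus of $\GL_n$. I will equip each side with (i) a quantum moment map from $U_\hbar(\mathfrak{gl}_n)$, (ii) a polynomial ``Gelfand--Tsetlin'' subalgebra $\Sym_\hbar\!\big(\mathfrak t_{\GL_1} \oplus \cdots \oplus \mathfrak t_{\GL_n}\big)$, and (iii) a recursion relating the $\GL_n$-case to the $\GL_{n-1}$-case; matching these three pieces of structure produces the map, and a classical-limit argument shows it is an isomorphism.

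On the Coulomb side I would unwind the Braverman--Finkelberg--Nakajima construction for $Q_n$ -- gauge group $\GL_{\mathbf v} = \GL_1 \times \cdots \times \GL_{n-1} \times (\GL_1)^{n}$, representation $N$ the sum of the bifundamentals read off the edges, and $\C_\hbar[\cM_C(Q_n)]$ the $(\GL_{\mathbf v})_{\mathcal O}\rtimes \C^\times_\hbar$-equivariant Borel--Moore homology of the variety of triples with its convolution product and Kazhdan filtration -- and then reorganize it through the ring-object formalism for quiver Coulomb branches in the derived Satake category. The fan of $n$ rank-one nodes, once its $(\GL_1)^{n} = \mathfrak t_{\GL_n}$-worth of gauge symmetry is accounted for, should assemble into the regular-representation (Moore--Tachikawa) ring object $\cA_{\GL_n}$; each edge of the chain contributes a standard bifundamental block; and carrying out the gauge reductions by $\GL_{n-1}, \GL_{n-2}, \ldots, \GL_1$ in turn then realizes $\C_\hbar[\cM_C(Q_n)]$ as an iterated quantum Hamiltonian reduction of $\cA_{\GL_n}$ along the chain. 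This makes the three structures visible: the $U_\hbar(\mathfrak{gl}_n)$ quantum moment map from the $\GL_n$-symmetry that the fan leaves un-reduced, the Gelfand--Tsetlin subalgebra from $H^\ast_{\C^\times_\hbar}$ of the classifying spaces of the successive tori, and a manifest compatibility with passing from $n$ to $n-1$ (the base case $n=1$ being trivial).

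On the other side I would use the Bernstein--Gelfand--Gelfand / Harish-Chandra picture: $\SL_n/U$ is a $T$-torsor over the flag variety whose affine closure is $\Spec\big(\bigoplus_{\lambda} V(\lambda)\big)$, and $D_\hbar(\SL_n/U)$ is generated by the left $\mathfrak{sl}_n$-action, by $\Sym_\hbar(\mathfrak t)$ (the infinitesimal right $T$-action), and by the multiplication operators from $\mathcal{O}(\SL_n/U)$ in the fundamental weights; passing to $\GL_n$ in place of $\SL_n$ (which only adds the overall $\C^\times$ one later quotients out), the branching $\GL_{n-1} \subset \GL_n$ exhibits $D_\hbar(\GL_n/U)$ as obtained from $D_\hbar(\GL_{n-1}/U)$ by exactly one further reduction-along-the-chain step, with the same seed and the same moment-map and Gelfand--Tsetlin data. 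Matching the two recursions term by term defines an algebra homomorphism $\C_\hbar[\cM_C(Q_n)] \to D_\hbar(\SL_n/U)$. To upgrade it to an isomorphism I would pass to associated graded for the Kazhdan filtrations, where it becomes a $\GL_n \times T$-equivariant morphism $\C[\cM_C(Q_n)] \to \C[\overline{T^*(\SL_n/U)}]$ of normal affine varieties of the same dimension $(n-1)(n+2)$; since the cotangent bundle $T^*(\SL_n/U)$ embeds in $\overline{T^*(\SL_n/U)}$ with complement of codimension at least two, and since over the locus where $\GL_{\mathbf v}$ acts freely the Coulomb branch is given by its abelianized description, one checks the map is an isomorphism over a common dense open subset, and normality then forces it to be an isomorphism, hence so is the original quantized map. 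The step I expect to be the real obstacle is the middle one: pinning down, \emph{compatibly with quantization}, the fan's contribution as the Moore--Tachikawa object $\cA_{\GL_n}$ and verifying that the iterated reduction along the chain reproduces $D_\hbar(\GL_n/U)$ on the nose -- this is where the homological definition of the monopole operators must be reconciled with the explicit Gelfand--Tsetlin/BGG generators, and where flatness of the successive reductions and the bookkeeping of the $\GL_n$-versus-$\SL_n$ central $\C^\times$ have to be handled with care.
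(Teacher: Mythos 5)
Your proposal takes a genuinely different route from the paper's, and as written it has gaps that are not merely technical. The paper's proof is a composition of four isomorphisms of ring objects: by definition $\C_\hbar[\cM_C(Q_n)]$ is the cohomology of $\cA_{\wT,N}$; Proposition~\ref{prop:!restrict} identifies this with $i'^!j'^*(\cA_{\wG,N})$, the !-restriction to $\Gr_{\wT}$ of the ring object of the \emph{fully gauged} linear quiver; the base-change compatibility of Proposition~\ref{prop:algiso} (the technical heart of the paper) converts this to $i^!j^*p_*(\cA_{\wG,N})$; Theorem~\ref{thm:BFNresult} identifies $p_*(\cA_{\wG,N})$ with $\cA_{reg}$; and Theorem~\ref{thm:GRresult} (Ginzburg--Riche) identifies $\pt_*i^!j^*(\cA_{reg})$ with $D_\hbar(SL_n/U)$. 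You correctly locate two of these ingredients --- the fan of $n$ rank-one nodes is the maximal torus of the terminal $\GL_n$ node, and the regular-representation (Moore--Tachikawa) object is what mediates --- but the mechanism you propose for assembling them is not the one that works.

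Concretely: (1) on the Coulomb side, gauging the chain nodes $\GL_1,\dots,\GL_{n-1}$ is \emph{not} an iterated quantum Hamiltonian reduction applied to $\cA_{\GL_n}$; that is the Higgs-branch operation. The operative facts are that the fully gauged chain pushes forward to $\cA_{reg}$ on $\Gr_{PGL_n}$ and that replacing the terminal $\GL_n$ by its torus is a !-restriction to $\Gr_T$, and the nontrivial point --- for which your outline has no substitute --- is that these two operations commute \emph{as operations on ring objects}. (2) The recursion exhibiting $D_\hbar(\GL_n/U)$ as one further reduction step applied to $D_\hbar(\GL_{n-1}/U)$ along $\GL_{n-1}\subset\GL_n$ is not a known fact and is not justified: $D(G/U)$ is a reduction of $D(G)$ by the maximal unipotent, not along the Gelfand--Tsetlin chain, and setting up such a recursion compatibly on both sides is essentially as hard as the theorem itself. (3) The concluding step fails as stated: a dominant morphism of normal affine varieties of the same dimension that is an isomorphism over a dense open subset need not be an isomorphism (consider $\C[x,xy]\subset\C[x,y]$). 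The standard repair needs the open locus to have complement of codimension at least two on \emph{both} sides; you address only the $\overline{T^*(SL_n/U)}$ side, and the abelianization locus of a Coulomb branch has complement of codimension one, so the rank-one walls must also be matched. If you wish to avoid re-deriving Ginzburg--Riche from BGG-style generators, the efficient path is the paper's: take Theorems~\ref{thm:BFNresult} and~\ref{thm:GRresult} as inputs and prove the base-change compatibility of the ring structures.
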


Coulomb branches are geometric objects attached to certain supersymmetric quantum field theories. In cases such as the quiver gauge theory above, Braverman-Finkelberg-Nakajima described the Coulomb branch in terms of the Borel-Moore homology of a certain infinite-dimensional space \cite{BFN18}. Specifically, to $Q_n$ is associated a pair of an algebraic group~$G$ and a representation $N$, and to any such pair they associate a space $\cR_{G,N}$. This has an action by the jet group $G_\cO$, and $\cM_C(Q_n)$ is the affine variety whose coordinate ring is $H_{\bullet}^{G_\cO}(\cR_{G,N})$. This has a commutative multiplication induced by a form of convolution, which becomes quantized when we incorporate equivariance under loop rotation.  A diverse range of objects in geometry and Lie theory turn out to be the Coulomb branches of corresponding field theories, and Theorem \ref{thm:mainthmintro} adds a new entry to the catalogue of such examples. 

A feature of $D_\hbar(G/U)$ which figures prominently in the literature is an action of the Weyl group of $G$, originally discovered by Gelfand-Graev. This is not induced from an action on the variety $G/U$, but rather is generated by a collection of partial Fourier transforms corresponding to the simple roots of $G$. That these generators satisfy braid relations is not obvious, and understanding them better has been a topic of recent interest \cite{GK22}. It is thus an attractive feature of Theorem \ref{thm:mainthmintro} that, in the $SL_n$ case, this action is completely manifest from the Coulomb branch perspective. 

\begin{Theorem}\label{thm:Weylaction}
Under the isomorphism of Theorem \ref{thm:mainthmintro}, the Gelfand-Graev action of $S_n$ on $D_\hbar(SL_n/U)$ is identified with the action on $\C_\hbar[\cM_C(Q_n)]$ induced by permutations of the vertices on the right-hand side of (\ref{eq:quiverQn}). 
\end{Theorem}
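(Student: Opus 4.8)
The plan is to reduce the statement to an identification at the level of the convolution algebra defining the Coulomb branch, and to recognize the permutation action there as matching the generators of the Gelfand--Graev action established in Theorem \ref{thm:mainthmintro}. First I would recall that the right-hand side of the quiver $Q_n$ consists of $n$ framing vertices, each carrying a one-dimensional space, so the symmetric group $S_n$ acts on the pair $(G_{Q_n}, N_{Q_n})$ attached to $Q_n$ by permuting these framings. Since the Coulomb branch construction $\cR_{G,N} \rightsquigarrow H^{G_\cO}_\bullet(\cR_{G,N})$ is functorial with respect to automorphisms of $(G,N)$ preserving the relevant structure, this $S_n$-action induces an action on $\C_\hbar[\cM_C(Q_n)]$ by algebra automorphisms; it is this action that the theorem refers to. The content of the theorem is therefore that, after transport along the isomorphism of Theorem \ref{thm:mainthmintro}, these automorphisms coincide with the Gelfand--Graev partial Fourier transforms $F_i$ on $D_\hbar(SL_n/U)$.

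The key step is to show both actions are pinned down by their behavior on a small, explicitly understood piece of the algebra. On the Coulomb branch side, the subalgebra $\C_\hbar[\cM_C]^{G_\cO \text{-fixed part from } \Gr^0]$ coming from the degree-zero part of the affine Grassmannian is the equivariant cohomology of a point for the torus of $G_{Q_n}$, i.e.\ a polynomial algebra on which the permutation action is literally by permuting coordinates; one then checks that under the isomorphism of Theorem \ref{thm:mainthmintro} this polynomial subalgebra is carried to the ``Cartan'' subalgebra of $D_\hbar(SL_n/U)$ — concretely, the image of $\mathrm{Sym}(\ft)$ (or $U(\ft)_\hbar$) acting by invariant vector fields — on which the Gelfand--Graev $S_n$ acts again by permuting coordinates, since this is exactly the standard $W$-action on $\ft$. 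So the two $S_n$-actions agree on this subalgebra. To upgrade this to an equality of the two actions on all of $D_\hbar(SL_n/U)$, I would invoke a rigidity/faithfulness input: the algebra $D_\hbar(SL_n/U)$ is generated by this polynomial subalgebra together with one further natural piece (the ``creation/annihilation'' operators along a fixed simple root, equivalently the image of $H^\bullet_{G_\cO}(\cR)$ in minuscule degree), and an $S_n$-equivariant map out of the generators that agrees with the given one on relations is unique. Alternatively — and this is probably cleaner — one notes that an algebra automorphism of $D_\hbar(SL_n/U)$ which is the identity on the polynomial subalgebra $U(\ft)_\hbar$ and commutes with the residual $(T\times T)$-action is inner by a unit, and the only such units lie in the center; tracking the action on a highest-weight-type module then forces the two automorphisms to agree.

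I would organize the remaining bookkeeping around the simple reflections: it suffices to check that the automorphism of $\C_\hbar[\cM_C(Q_n)]$ induced by the transposition $s_i$ swapping the $i$-th and $(i{+}1)$-st framing vertices matches $F_i$. By the previous paragraph both sides already agree on $U(\ft)_\hbar$, and both are algebra automorphisms intertwining the $\hbar$-action; so the check reduces to comparing their effect on a single additional generator — for which I would use the description of $D_\hbar(SL_n/U)$ via the principal series / the $\mathcal{D}$-module realization recalled earlier, where $F_i$ is the partial Fourier transform in the $i$-th root direction, against the explicit BFN-type generators of $\C_\hbar[\cM_C(Q_n)]$ attached to the coweights $\pm\varpi_i^\vee$. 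The Fourier transform exchanges the two minuscule coweight classes $[\varpi_i^\vee]$ and $[-\varpi_i^\vee]$ up to a sign and a polynomial factor, which is precisely what the framing transposition $s_i$ does to the corresponding fundamental-representation summands of $N_{Q_n}$. \textbf{The main obstacle} I anticipate is the sign/normalization matching in this last comparison: the Gelfand--Graev generators satisfy braid relations only after a careful choice of normalization (this is the subtle point emphasized in \cite{GK22}), and the BFN permutation action is manifestly a genuine $S_n$-action, so I must be sure the isomorphism of Theorem \ref{thm:mainthmintro} is set up so that the normalizations on the two sides are compatible — equivalently, that no spurious character of $S_n$ creeps in. Handling this amounts to identifying the images of the ``half-densities'' or the relevant shift of grading on both sides, and verifying the $S_2$-relation $F_i^2 = \mathrm{id}$ (or its correct twisted form) holds with matching constants; once that single relation is matched, functoriality of the Coulomb branch construction propagates the identification to the whole group.
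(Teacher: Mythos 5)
Your proposal takes a genuinely different route from the paper, and as written it has real gaps. The paper avoids any generator-by-generator comparison: it reruns the entire proof of Theorem \ref{thm:mainthmintro} equivariantly with respect to the normalizer $N_T \cong T \rtimes S_n$ of the torus and its preimage $\wN_T \subset \wG$, producing an isomorphism $p'_*(\cA_{\wN_T,N}) \cong i^! j^*(\cA_{reg})$ of ring objects in $D({}_{N_T}\Gr_{N_T})$. Taking cohomology then yields an isomorphism of $S_n$-equivariant $P^\vee$-graded $\C_\hbar[\ft]$-algebras essentially for free, and the identification of the resulting $S_n$-action on $D_\hbar(SL_n/U)$ with the Gelfand--Graev action is quoted from Ginzburg--Riche \cite[Thm.~2.5.5, Lem.~3.2.1]{GR15}. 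In particular the braid relations and all normalization issues are inherited from a single honest group action and never have to be checked by hand.

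Two concrete problems with your sketch. First, the $n$ rank-one vertices of $Q_n$ are gauge nodes, not framing nodes: permuting them is an automorphism of the gauge group $\wT$ itself, realized inside $N_{\wG}(\wT)/\wT$, rather than an automorphism of the matter representation commuting with the gauge group. This distinction matters, because the induced action on $\C_\hbar[\cM_C(Q_n)]$ goes through the action on $\Gr_{\wT}$ and $\cR_{\wT,N}$, which is exactly what allows the paper to package it as $\wN_T$-equivariance. Second, and more seriously, your rigidity step is unjustified and false as stated: an automorphism of $D_\hbar(SL_n/U)$ fixing $U(\ft)_\hbar$ pointwise and respecting the $P^\vee$-grading need not be central or trivial --- the whole torus $T^\vee$ acts by such automorphisms, scaling each graded piece by a character. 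So agreement on the Cartan subalgebra pins down the two $S_n$-actions only up to a $T^\vee$-valued discrepancy, and the remaining comparison of the partial Fourier transform $F_i$ with the vertex transposition on the minuscule monopole generators --- including the sign and normalization matching you yourself flag as the main obstacle --- is precisely where the content of the theorem lives. That comparison is not carried out in your proposal, and it is the genuinely subtle part (as \cite{GK22} emphasizes); without it the argument is not complete.
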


A more recent motivation for the study of $D_\hbar(G/U)$ comes from differential geometry, specifically the Dancer-Kirwan-Swann theory of hyperk\"ahler implosions \cite{DKS13}. Recall that $D_\hbar(G/U)$ is a quantization of the ring of regular functions on $T^*(G/U)$. Since the variety $G/U$ is not affine but only quasi-affine, $T^*(G/U)$ is not equal to its affine closure $\ol{T^*(G/U)} := \Spec\, \C[T^*(G/U)]$. The latter contains $T^*(G/U)$ as a smooth open subvariety, but globally is typically complicated and singular  (indeed, it is nontrivial even to show that $\C[T^*(G/U)]$ is finitely generated \cite[Lem. 3.6.2]{GR15}).  Dancer-Kirwan-Swann argue that 
$\ol{T^*(G/U)}$ can be interpreted as the universal hyperk\"ahler implosion of $G$, analogous to the earlier theory of symplectic implosions \cite{GJS02}. 

Theorem \ref{thm:mainthmintro} is directly inspired by more recent work of Dancer-Hanany-Kirwan \cite{DHK21}, who conjectured its semiclassical limit. They identified the quiver $Q_n$ and demonstrated its consistency with various numerical requirements, in addition to showing that for small values of~$n$ the claim follows from computations already in the literature (see \cite[Sec.~8]{DHK21}). They emphasized in particular that this description identifies the symplectic dual of $\ol{T^*(SL_n/U)}$, namely it is the Higgs branch $\cM_H(Q_n)$ of the relevant quiver gauge theory. We also note that the above is just one of several remarkable proposals in \cite{DHK21}, the others of which we do not address. 

Beyond confirming this proposal in quantized form, a further goal of ours is to formulate and prove a generalization to arbitrary unipotent reductions of $T^* SL_n$. Recall that the action of $U$ on $SL_n$ induces a moment map $T^* SL_n \to \fu^*$, and given a moment map value $\psi \in \fu^*$ we have the Hamiltonian reduction 
$$ T^* SL_n \sslash_\psi U := \Spec\, \C[(T^*SL_n \times_{\fu^*} {\psi}) / U]. $$
We have $T^* SL_n \sslash_0 U \cong \ol{T^*(SL_n/U)}$, hence as we vary $\psi$ we obtain a family of deformations of $\ol{T^*(SL_n/U)}$. 
	
Up to symplectic algebraic isomorphism, only finitely many isomorphism classes appear in this family. Specifically, writing $e_1, \dotsc, e_{n-1}$ for the Chevalley generators of $\fu$, any function $\{e_1, \dotsc, e_{n-1}\} \to \{1,0\}$ extends uniquely to a Lie algebra character $\psi \in \fu^*$, and any reduction of $T^* SL_n$ by $U$ is isomorphic to one of this form. We can in turn identify such functions with the set of ordered tuples $\vec{n} = (n_1,\dotsc,n_k)$ of positive integers summing to $n$: given $\vec{n}$ we set $\psi(e_i) = 0$ if $i = n_1 + \cdots + n_j$ for some $j$, and $\psi(e_i) = 1$ otherwise. 

\begin{Theorem}\label{thm:levithmintro}
Let $\vec{n} = (n_1,\dotsc,n_k)$ be an ordered tuple of positive integers summing to $n$, and let $\psi \in \fu^*$ be the associated character.  Then there exists an isomorphism $T^* SL_n \sslash_{\psi} U \cong \cM_C(Q_{\vec{n}})$ of algebraic varieties, where $\cM_C(Q_{\vec{n}})$ is the Coulomb branch of the $3d$ $\cN=4$ quiver gauge theory associated to the quiver
\begin{equation*} 
\begin{tikzpicture}[baseline=(current bounding box.center), thick, >=Stealth]
	% Draw vertices
	\node[circle, draw, minimum size=30] (1) at (0,0) {$1$};
	\node[circle, draw, minimum size=30] (2) at (2,0) {$2$};
	%\node (dots) at (4,0) {$\dots$};
	\fill (3.8,0) circle (.03);
	\fill (4,0) circle (.03);
	\fill (4.2,0) circle (.03);
	\fill (8,-0.4) circle (.03);
	\fill (8,-0.6) circle (.03);
	\fill (8,-0.8) circle (.03);
	\node[circle, draw, minimum size=30] (n-1) at (6,0) {$n - 1$};
	\node [circle, draw, minimum size=30] (v1) at (8,1.6) {$n_1$};
	\node [circle, draw, minimum size=30] (v2) at (8,0.4) {$n_2$};
	\node [circle, draw, minimum size=30] (v4) at (8,-1.6) {$n_k$};
	
	% Draw edges
	\draw (2) -- (1);
	\draw (2) -- (3.6,0);
	\draw (n-1) -- (4.4,0);
	\draw (n-1) -- (v1);
	\draw (n-1) -- (v2);
	\draw (n-1) -- (v4);
\end{tikzpicture}
\end{equation*} with gauge group $\tilde{L}$ (depending on $\vec{n}$) defined by (\ref{eq:Tgroupsquare}). 
\end{Theorem}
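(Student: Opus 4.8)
\emph{Proof strategy.} The plan is to obtain Theorem~\ref{thm:levithmintro} by running the proof of Theorem~\ref{thm:mainthmintro} with a general character $\psi$ in place of $0$, organized as a family over the space of unipotent characters and then restricted to a single fiber. First I would assemble the reductions into one family: writing $\mathfrak c := (\fu/[\fu,\fu])^*$ for the affine space of Lie-algebra characters of $\fu$, set $\mathcal Z := (T^*SL_n \times_{\fu^*} \mathfrak c)/U$, which maps to $\mathfrak c$ with fiber over $\psi$ equal to $T^*SL_n \sslash_\psi U$. All fibers have dimension $\dim SL_n + \operatorname{rank} SL_n$, and $\mathcal Z$ carries the residual left $SL_n$-action together with the residual right $T$-action; the latter covers the scaling action of $T$ on $\mathfrak c$, whose orbits are indexed exactly by the compositions $\vec n$ of $n$ — the orbit of $\psi$ recording the set of simple roots on which $\psi$ is nonzero, equivalently the root system of the Levi $L = S(GL_{n_1}\times\cdots\times GL_{n_k})$. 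Thus Theorem~\ref{thm:levithmintro}, for all $\vec n$ simultaneously, amounts to an $SL_n\times T$-equivariant identification of $\mathcal Z \to \mathfrak c$ with the analogous family $\mathcal Z' \to \mathfrak c$ on the Coulomb side, whose fiber over (a point of) the $T$-orbit attached to $\vec n$ is $\cM_C(Q_{\vec n})$: equating $n_j$ of the framing vertices of $Q_n$ is the combinatorial shadow of restricting to the corresponding $T$-orbit in $\mathfrak c$.

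Concretely I would proceed as follows. (i) Pass to the quantized statement $D_\hbar(SL_n/_\psi U)\cong \C_\hbar[\cM_C(Q_{\vec n})]$, where $D_\hbar(SL_n/_\psi U)$ is the quantum Hamiltonian reduction of $D_\hbar(SL_n)$ by $U$ at $\psi$; this reduction is flat with associated graded $\C[T^*SL_n\sslash_\psi U]$ (a routine verification, the $\fu$-action on $T^*SL_n$ being free), so the variety statement of Theorem~\ref{thm:levithmintro} follows by degeneration. (ii) Perform the $U$-reduction in two steps: first reduce by the unipotent radical $U_P$ of the standard parabolic with Levi $L$ at moment value $0$, producing the algebra $D_\hbar(SL_n/U_P)$ of $\hbar$-differential operators on the \emph{partial} base affine space $SL_n/U_P$, which still carries a residual $L$-action; then Whittaker-reduce by $N_L := U\cap L$ against the principal character of $L$ — observing that $\psi$ vanishes on $U_P$ and restricts to this principal character, directly from the description of $\psi$ in terms of $\vec n$. (iii) Observe that the chain of identifications proving Theorem~\ref{thm:mainthmintro} — passing, in the notation of the body, through the geometric Satake category of $\GLnhat$, the Whittaker Hamiltonian reduction of $T^*SL_n$, and the BFN realization of $\cM_C(Q_n)$ — is functorial in the reductive datum. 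Replacing $\GLnhat$ by the Levi-type group $\widehat L = \widehat{GL_{n_1}}\times\cdots\times\widehat{GL_{n_k}}$, which is precisely what the merged framing of $Q_{\vec n}$ records, the same chain identifies $\cM_C(Q_{\vec n})$ with the Whittaker reduction of step (ii). The two extreme cases bracket the argument: $\vec n=(1,\dots,1)$, where $\psi=0$ and the claim is Theorem~\ref{thm:mainthmintro} itself; and $\vec n=(n)$, where $\psi$ is the principal character, $T^*SL_n\sslash_\psi U$ is the Kostant--Whittaker reduction of $T^*SL_n$, and $Q_{(n)}$ is the unframed linear $A_n$ quiver with dimension vector $(1,2,\dots,n)$, whose Coulomb branch computation is essentially in \cite{BFN18}. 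The permutation action of $S_n$ on the framing vertices of Theorem~\ref{thm:Weylaction} descends, as it must, to the $W_L = \prod_j S_{n_j}$ permuting the equal framings of $Q_{\vec n}$.

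I expect step (iii) to be the main obstacle. One must check that each link in the proof of Theorem~\ref{thm:mainthmintro} is genuinely compatible with parabolic induction — equivalently, stable under base change along $\mathfrak c$ — and, relatedly, one must control the affine closures $\ol{T^*(SL_n/U_P)}$ of the partial base affine spaces (the nonzero fibers of $\mathcal Z \to \mathfrak c$) as tightly as one controls $\ol{T^*(SL_n/U)}$: their finite generation, normality, and the behavior of the moment map to $\fl^*\sslash L$, each of which is already nontrivial in the case $P = B$. A secondary point is to pin down the parameter space $\mathfrak c$ on the differential-operator side and the deformation-parameter space on the Coulomb side as \emph{the same} $T$-space, so that the fiberwise statement attaches the quiver $Q_{\vec n}$ to the correct composition, and to confirm that the quantum reduction in (i) is flat uniformly in $\psi$.
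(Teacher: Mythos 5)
Your overall architecture matches the paper's: Theorem~\ref{thm:levithmintro} is indeed obtained by rerunning the proof of Theorem~\ref{thm:mainthmintro} with the standard Levi $L \subset PGL_n$ attached to $\vec n$ in place of the torus $T$, the merged framing vertices of $Q_{\vec n}$ corresponding exactly to the group $\wL$ covering $L$. The base-change machinery (Propositions~\ref{prop:algiso} and~\ref{prop:!restrict}) and the Braverman--Finkelberg--Nakajima input (Theorem~\ref{thm:BFNresult}) are already formulated for an arbitrary subgroup $L$, so that portion of your ``functoriality in the reductive datum'' is genuine and costs nothing.

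The gap is your step (iii), which you correctly flag as the main obstacle but then do not close. After base change one still needs the identification $\pt_* i^! j^*(\cA_{reg})|_{\hbar=0} \cong \C[T^*SL_n \sslash_\psi U]$, i.e.\ the statement that the $\hL_\cO$-equivariant cohomology of the !-restriction of $\cA_{reg}$ to $\Gr_L$ computes the $\psi$-reduction. This is the Levi analogue of the Ginzburg--Riche theorem and carries the entire Lie-theoretic content of the generalization; it is not a formal consequence of the $L=T$ case. Your two-step decomposition of the reduction (by $U_P$ at $0$, then Whittaker by $N_L = U\cap L$ against the principal character) is a correct description of the \emph{target}, but it is not a proof that the Satake-side restriction computes it --- that is a separate theorem, which the paper imports from Macerato \cite{Mac23} as Theorem~\ref{thm:Macresult}. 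A secondary but real problem is your step (i): you propose to prove the quantized isomorphism first and obtain the variety statement by degeneration. The quantized statement is strictly stronger and is precisely what is \emph{not} available --- the paper states Theorem~\ref{thm:levithmintro} only at the classical level because the effect of loop equivariance on the Levi restriction is not analyzed in \cite{Mac23}. Your flatness argument would let you pass from the quantized statement to the classical one, but you have no independent proof of the quantized statement to start from, so the logical order must be reversed: prove the classical statement directly, as the paper does, by setting $\hbar = 0$ before invoking the Levi-restriction theorem.
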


At the other end of the spectrum from $\vec{n} = (1,\cdots,1)$ we have $\vec{n} = (n)$, which is associated to the Whittaker character $\psi_W$. That the Coulomb branch of the associated quiver gauge theory is isomorphic to the Whittaker reduction of $T^* SL_n$ was explicitly shown in \cite[Sec. 7.1]{DG19} and implicitly in \cite[Sec. 5(ii)]{BFN19} (using the identification of the Whittaker reduction with the product of the group and the Kostant slice \cite[Lem. 3.2.3(iii)]{GK22}). Thus Theorem~\ref{thm:levithmintro} interpolates between these established results and the proposal of \cite{DHK21}. 

The proof of Theorem \ref{thm:mainthmintro} makes crucial use of two key results in the literature. The first is a result of Ginzburg-Riche \cite{GR15} which provides an interpretation of $D_\hbar(SL_n/U)$ in terms of the affine Grassmannian $\Gr_{PGL_n}$. Recall that geometric Satake identifies $PGL_{n,\cO}$-equivariant perverse sheaves on $\Gr_{PGL_n}$ with representations of $SL_n$. Letting $\cA_{reg}$ denote the sheaf associated to the regular representation $\C[SL_n]$, the results of Ginzburg-Riche imply that $D_\hbar(SL_n/U)$ can be recovered as the $T_{\cO} \rtimes \bC^\times$-equivariant  cohomology of the !-restriction of $\cA_{reg}$ to $\Gr_{T}$, where $T$ is the maximal torus of $PGL_n$. 

On the other hand, Braverman-Finkelberg-Nakajima have shown that $\cA_{reg}$ is the pushforward to $\Gr_{PGL_n}$ of the dualizing sheaf of the space $\cR_{G,N}$ associated to the quiver $Q_{\vec{n}}$ when $\vec{n} = (n)$ \cite[Thm. 2.11]{BFN19}. Given these results, we derive Theorem \ref{thm:mainthmintro} using a certain compatibility between base change and ring objects in the derived Satake categories of different groups, established in Section \ref{sec:basechange}. This analysis is a variation on that of \cite[Sec. 2(vi)]{BFN19}, where the !-restriction of $\cA_{reg}$ to the identity (known by \cite[Thm. 7.3.1]{ABG04} to give the coordinate ring of the nilpotent cone) is related to the quiver gauge theory obtained from $Q_{(n)}$ by framing rather than gauging its rank $n$ vertex (see Remark \ref{rem:BFNnilconecomparison}). 

We use the same strategy to prove Theorem \ref{thm:levithmintro}, except that in place of \cite{GR15} we use recent results of Macerato \cite{Mac23}. These parallel those of \cite{GR15} except the role of $T$ is played by a general Levi subgroup of $PGL_n$. Note that such subgroups are also indexed by ordered tuples summing to $n$, and that the corresponding Levi is a quotient of the gauge group of the theory appearing in Theorem \ref{thm:levithmintro}. The effect of loop equivariance is not analyzed explicitly in \cite{Mac23}, which is why Theorem~\ref{thm:levithmintro} is only stated at the classical level.

More generally, it seems that Theorems \ref{thm:mainthmintro}, \ref{thm:Weylaction}, and \ref{thm:levithmintro} should admit extensions where $SL_n$ is replaced by $SO_{2n}$ or $Sp_n$. That is, assuming the generalization of \cite[Thm. 2.11]{BFN19} anticipated in \cite[Rem. 5.3]{BFN19}, such extensions should follow from a variation of the same argument. 

A consequence of Theorem \ref{thm:mainthmintro} is that it induces a canonical integrable system on $\affineClosureOfCotangentBundleofBasicAffineSpaceFORSLN$. More precisely, letting $\wG$ denote the gauge group associated to (\ref{eq:quiverQn}) and letting $\wW := N_{\wG}(\wT)/\wT$ denote its Weyl group, we have the following corollary of \cite[\textsection 5(v)]{BFN18}. 

\begin{Corollary}\label{As a Coulomb Branch affineClosureOfCotangentBundleofBasicAffineSpaceFORSLN Admits an Integrable System and is Birational To the Appropriate Space}
	There is an flat, injective ring homomorphism $\bC[\wft]^{\wW} \to \bC[\affineClosureOfCotangentBundleofBasicAffineSpaceFORSLN]$ whose image is a Poisson-commutative subalgebra. There is a birational map \begin{equation*}
		\affineClosureOfCotangentBundleofBasicAffineSpaceFORSLN \dashrightarrow{} T^*(\wT^{\vee})\sslash \wW = (\wT^{\vee} \times \wft)\sslash \wW\end{equation*} of varieties over $\wft\sslash \wW$, and in particular the generic fiber of  $\affineClosureOfCotangentBundleofBasicAffineSpaceFORSLN \to \wft\sslash \wW$ is the dual torus $\wT^{\vee}$. 
\end{Corollary}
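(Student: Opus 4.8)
The plan is to deduce the corollary from the general structure theory of Coulomb branches in \cite[\textsection 5(v)]{BFN18}, once that structure has been transported across Theorem \ref{thm:mainthmintro}. First I would pass to the classical limit: reducing the $\C[\hbar]$-algebra isomorphism $D_\hbar(SL_n/U)\cong\C_\hbar[\cM_C(Q_n)]$ of Theorem \ref{thm:mainthmintro} modulo $\hbar$ (equivalently, passing to associated graded with respect to the order filtration) produces an isomorphism of Poisson algebras $\C[\affineClosureOfCotangentBundleofBasicAffineSpaceFORSLN]\cong\C[\cM_C(Q_n)]$, and hence an isomorphism of Poisson varieties $\affineClosureOfCotangentBundleofBasicAffineSpaceFORSLN\cong\cM_C(\wG,N)$, where $(\wG,N)$ is the gauge-theory datum attached to the quiver (\ref{eq:quiverQn}). (Alternatively one could start from Theorem \ref{thm:levithmintro} with $\vec{n}=(1,\dots,1)$, but then compatibility with Poisson structures would have to be checked separately, so the quantized statement is the more convenient input.) After this reduction nothing specific to $SL_n/U$ remains, and it suffices to quote what \cite[\textsection 5(v)]{BFN18} establishes for an arbitrary Coulomb branch $\cM_C(\wG,N)$.

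Next I would recall that structure. Write $\wT\subseteq\wG$ for a maximal torus, $\wft=\operatorname{Lie}(\wT)$, and $\wW=N_{\wG}(\wT)/\wT$, and recall $\C[\cM_C(\wG,N)]=H^{\wG_\cO}_{\bullet}(\cR_{\wG,N})$. As in \cite[\textsection 5(v)]{BFN18}, there is a natural ring homomorphism $\C[\wft]^{\wW}=H^{\bullet}_{\wG}(\pt)\to H^{\wG_\cO}_{\bullet}(\cR_{\wG,N})=\C[\cM_C(\wG,N)]$; it is flat and injective, its image is Poisson-commutative (the relevant classes are pulled back from a point, so they bracket trivially against one another), and the associated morphism $\varpi\colon\cM_C(\wG,N)\to\wft\sslash\wW$ restricts, over the regular locus in $\wft\sslash\wW$, to the projection $(\wT^{\vee}\times\wft)\sslash\wW=T^*(\wT^{\vee})\sslash\wW\to\wft\sslash\wW$; in particular the generic fiber of $\varpi$ is the dual torus $\wT^{\vee}$ and $\varpi$ is birational over $\wft\sslash\wW$ to $T^*(\wT^{\vee})\sslash\wW$. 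Pulling all of this back along the Poisson isomorphism of the first step gives precisely the corollary.

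I do not expect a real obstacle here --- the statement is essentially a specialization of \cite[\textsection 5(v)]{BFN18} --- but two bookkeeping points merit attention. The first is making sure the isomorphism used respects Poisson brackets, which is why the argument is routed through the quantized Theorem \ref{thm:mainthmintro} before taking associated graded. The second is the flatness of $\varpi$: beyond quoting it from \cite[\textsection 5(v)]{BFN18}, one can confirm it via the miracle-flatness criterion, using that $\cM_C(\wG,N)$ is a normal, Cohen--Macaulay affine variety of dimension $2\operatorname{rk}\wG$ (it has symplectic singularities, (\ref{eq:quiverQn}) being a good or ugly quiver gauge theory) and that $\varpi$ is equidimensional over the smooth base $\wft\sslash\wW$ of dimension $\operatorname{rk}\wG$.
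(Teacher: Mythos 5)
Your proposal is correct and matches the paper's approach: the corollary is stated there precisely as a consequence of Theorem \ref{thm:mainthmintro} combined with the general structure theory of Coulomb branches in \cite[\textsection 5(v)]{BFN18}, which is exactly the route you take. The additional bookkeeping you supply (passing through the quantized statement to control the Poisson structure, and the miracle-flatness check) is consistent with, and slightly more detailed than, what the paper records.
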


In fact, the arguments of \cite[\textsection 5(v)]{BFN18}, combined with \cref{thm:mainthmintro}, give rise to a commutative subalgebra \begin{equation}\label{ComAlg}\bC_{\hbar}[\wft]^{\wW} = \bC[\wft]^{\wW}[\hbar] \subseteq D_\hbar(SL_n/U).\end{equation} We expect that this subalgebra can be obtained from the \textit{Gelfand-Tsetlin subalgebra} of the asymptotic universal enveloping algebra of $\mathfrak{gl}_n$; we make this expectation precise in \cref{Bonus Expectations of Theorem}. 

We also recall that by a recent result of \cite{BellamyCoulombBranchesHaveSymplecticSingularities} the Coulomb branch of any $3d\text{ }\mathcal{N}=4$ gauge theory admits symplectic singularities in the sense of \cite{BeauvilleSymplecticSingularities}. Combined this with, Theorem~\ref{thm:mainthmintro} thus gives a new proof for $SL_n$ of the Ginzburg-Kazhdan conjecture that  $\affineClosureOfCotangentBundleofBasicAffineSpace$ has symplectic singularities for any semisimple $G$ (this was first proved in \cite{JiaTheGeometryOfTheAffineClosureOfCotangentBundleOfBasicAffineSpaceForSLn} for $SL_n$ and in \cite{GannonAProofofGinzburgKazhdanConjecture} for general $G$). 

\subsection{Acknowledgements} We thank Ana B\u alibanu, David Ben-Zvi, Sabin Cautis, Tudor Dimofte, Nikolay Grantcharov, Victor Ginzburg, Justin Hilburn, Mark Macerato, Hiraku Nakajima, and Sam Raskin for useful comments and discussions. H. W. was supported by NSF CAREER grant DMS-2143922. 

\section{Generalities and conventions} \label{sec:conventions}

\subsection{Coulomb branches}\label{sec:liecon} 

Let $G$ be a complex reductive group. We write $G_\cK$ for its formal loop group, $G_\cO \subset G_\cK$ for the subgroup of loops extending over the formal disc, and $\hG_\cO := G_\cO \rtimes \C^\times$ and $\hG_\cK := G_\cK \rtimes \C^\times$ for their loop-equivariant extensions. We have the affine Grassmannian $\Gr_G := G_\cK/G_\cO \cong \hG_\cK/ \hG_\cO$, and the (loop) equivariant Grassmannian ${}_G \Gr_G := \hG_\cO \backslash \hG_\cK/ \hG_\cO$ (we take the stack quotient in the \'etale topology).  If $L \subset G$ is a subgroup, we also write ${}_L \Gr_G := \hL_\cO \backslash \hG_\cK/ \hG_\cO$.

If $N$ is a finite-dimensional representation of $G$, the associated Coulomb branch $\cM_{G,N}$ is defined as follows \cite{BFN18}. We write $\cT_{G,N} := G_\cK \times_{G_\cO} N_\cO$ and consider the Cartesian diagram
\begin{equation*}
	\begin{tikzpicture}
		[baseline=(current  bounding  box.center),thick,>=\arrtip]
		\node (a) at (0,0) {$\cR_{G,N}$};
		\node (b) at (3,0) {$\Gr_G \times N_\cO$};
		\node (c) at (0,-1.5) {$\cT_{G,N}$};
		\node (d) at (3,-1.5) {$\Gr_G \times N_\cK,$};
		\draw[->] (a) to node[above] {$ $} (b);
		\draw[->] (b) to node[right] {$ $} (d);
		\draw[->] (a) to node[left] {$ $}(c);
		\draw[->] (c) to node[above] {$ $} (d);
	\end{tikzpicture}
\end{equation*}
where the lower map takes $[g, s]$ to $([g],[gs])$ for $g \in G_\cK$ and $s \in N_\cO$. 
The fiber product $\cR_{G,N}$ is an ind-scheme which is not of ind-finite type, and which carries an induced action of $\hG_\cO$. The quantized Coulomb branch algebra $\bC_\hbar[\cM_{G,N}]$ is the equivariant Borel-Moore homology $H_\bullet^{\hG_\cO}(\cR_{G,N})$, equipped with a certain convolution product. The fact that $\cR_{G,N}$ is not of ind-finite type means that defining $H_\bullet^{\hG_\cO}(\cR_{G,N})$ requires specifying certain degree normalizations on finite-type approximations, see \cite[Sec. 2(ii)]{BFN18}. The parameter $\hbar$ is by definition the equivariant parameter of the loop rotation $\bC^\times$. The ring $\bC_\hbar[\cM_{G,N}]$ becomes commutative by setting it to zero, and the Coulomb branch $\cM_{G,N}$ is the resulting affine variety. 

\subsection{Equivariant sheaves}

Given a complex variety $X$, we write $D(X)$ for the ind-completion of the bounded derived category of constructible sheaves on $X$. If an algebraic group $G$ acts on $X$, we write $D_G(X)$ for the corresponding equivariant category. As in \cite{BFN19}, the foundations of \cite{BernsteinLuntzEquivariantSheavesandFunctors} are sufficient for our purposes, up to standard adaptations allowing $X$ and $G$ to be infinite-dimensional (see \cite[Sec. 2(i)]{BFN19} or \cite[Sec. 9.1]{Ach21}). 

%Given an ind-finite-type ind-scheme $X$ over $\bC$, we write $D(X)$ for the ind-completion of the bounded derived category of constructible sheaves on $X$. If a pro-algebraic group $G$ acts on $X$, we write $D_G(X)$ for the corresponding equivariant category. As in \cite{BFN19}, the foundations of \cite{BernsteinLuntzEquivariantSheavesandFunctors} are sufficient for our arguments, up to standard adaptations allowing $X$ and $G$ to be infinite-dimensional (see \cite[Sec. 2(i)]{BFN19} or \cite[Sec. 9.1]{Ach21}). 

We do find, however, that our arguments are more transparent when written systematically in terms of quotient stacks, and we freely use $D(X/H)$ as a synonym for $D_H(X)$. Our notation is set up so that the derived Satake category, written as $D_G(\Gr_G)$ in \cite{BFN19}, is instead written as $D({}_G \Gr_G)$ here (more precisely, the latter is the loop-equivariant version). %It's really G_O-equivariant sheaves, so it's not literally a special case.

This convention lets us use uniform notation for ordinary pushforwards/pullbacks and the generalized pushforwards/pullbacks of \cite[Sec. 6]{BernsteinLuntzEquivariantSheavesandFunctors}, including induction/restriction functors. That is, if $H \to G$ is a homomorphism, $f: Y \to X$ an $H$-equivariant map, and $\phi: Y/H \to X/G$ the induced map of quotient stacks, then the general pushforward $Q_{f*}: D_H(Y) \to D_G(X)$ of \cite[Sec. 6]{BernsteinLuntzEquivariantSheavesandFunctors} is synonymous with $\phi_*$, similarly for $Q_f^*$ and $\phi^*$. 

A related terminological convention is that we use the term smooth base change to refer also to \cite[Thm. 3.4.1, Prop. 7.2]{BernsteinLuntzEquivariantSheavesandFunctors}. These state that $Q_f^*$ commutes with the other sheaf operations when $X = Y$ and $f = \id$, and may be interpreted as smooth base change for the quotient map $\phi$ (more precisely, pro-smooth base change if $G$ and $H$ are only pro-algebraic).

%We do find it convenient, however, to write our discussion mostly in terms of quotient stacks, using the category $D(X/G)$ of sheaves on the quotient $X/G$ synonymously with $D_G(X)$. In particular, our notation is set up so that the derived Satake category, written as $D_G(\Gr_G)$ in \cite{BFN19}, is instead written as $D({}_G \Gr_G)$ here (though the latter also includes loop rotations). 

%By extension, if $H \to G$ is a homomorphism, $f: Y \to X$ an $H$-equivariant map, and $\phi: Y/H \to X/G$ the associated quotient map, then the general pushforward $Q_{f*}: D_H(Y) \to D_G(X)$ of \cite[Sec. 6]{BernsteinLuntzEquivariantSheavesandFunctors} is then synonymous with $\phi_*$, similarly for $Q_f^*$ and $\phi^*$. A related terminological convention is that we use the term smooth base change to refer to \cite[Thm. 3.4.1, Prop. 7.2]{BernsteinLuntzEquivariantSheavesandFunctors}. These state that $Q_f^*$ commutes with the other sheaf operations when $X = Y$ and $f = \id$, and may be interpreted as smooth base change for the quotient map $\phi$ (more precisely, pro-smooth base change if $G$ and $H$ are honestly pro-algebraic).

\subsection{$!$-sheaves and $*$-sheaves}
We now explain how our discussion can be interpreted from the foundational perspective of \cite{Ras14b,BKV22}. These are independent of \cite{BernsteinLuntzEquivariantSheavesandFunctors} and involve stacks in a more fundamental way. This discussion will not be logically essential for us, and we emphasize again that the reader familiar with \cite{BFN19} and its technical language may regard our use of stacks as a notational device. However, some conceptual points are easier to discuss from this perspective. In particular, making the distinction between $!$-sheaves and $*$-sheaves as in \cite{Ras14b} provides a convenient way of sidestepping the infinite shifts which appear in \cite{BFN18,BFN19} (e.g. ``$\omega_{N_\cO} \cong \bC_{N_\cO}[2 \dim N_\cO]$''). 

This distinction appears specifically when trying to extend classical constructible (or $\ell$-adic, de Rham, etc.) sheaf theory to accommodate infinite-type spaces or groups. Roughly, one must choose between having canonically functorial $!$-pullbacks but $*$-pushforwards which are undefined or depend on non-canonical choices ($!$-sheaves), or having the converse ($*$-sheaves). In following \cite{BFN19} we implicitly work with $*$-sheaves (or a variant thereof), though \cite{BKV22} and many other references prioritize $!$-sheaves. 

%In the terminology of \cite{Ras14b}, the main subtlety is that in following \cite{BFN19} we implicitly work with $*$-sheaves (or a variant thereof), whereas \cite{BKV22} and many other references prioritize $!$-sheaves. This is a distinction which arises in setting up constructible (or $\ell$-adic, de Rham, etc.) sheaf theory on stacks involving infinite-type spaces or groups. Roughly, one must choose between having canonically functorial $!$-pullbacks but $*$-pushforwards which depend on non-canonical choices, or having the converse. 

Before recalling these notions more precisely we fix some notation. Write $\Catinfty$ for the $\infty$-category of small $\infty$-categories and $\PrL$ for that of presentable $\infty$-categories and colimit-preserving functors (the descent constructions below necessitate enhanced catgories). We work over $\bC$, and write $\Sch_{ft}$ for the category of finite-type schemes, $\Sch$ for the category of qcqs schemes, and $\Stk$ for the $\infty$-category of \'etale stacks, which we will hereafter simply refer to as the category of stacks.

For the present purposes, we say a stack is geometric if it admits an affine \'etale surjection $\Spec A \to X$ which is strongly pro-smooth (see \cite[Not. 1.1.1(d)]{BKV22}). A stack is ind-geometric if it is a filtered colimit of geometric stacks along finitely presented closed immersions. The \'etale quotient $\cR_{G,N}/G_\cO$ is an ind-geometric stack in this sense (we borrow the terminology of \cite{CWig} but adapt some details to the constructible setting). We write $\GStk$ and $\indGStk$ for the categories of geometric and ind-geometric stacks. 

From classical sheaf theory we take as given the functors $D^!_c, D^*_c: \Sch_{ft}^{\op} \to \Catinfty$ that take a morphism $f:X \to Y$ respectively to the pullback $f^!: D_c(Y) \to D_c(X)$ or $f^*: D_c(Y) \to D_c(X)$ (here $D_c(X) \subseteq D(X)$ denotes compact objects). Starting from $D^!_c$ we now
\begin{enumerate}
	\item left Kan extend to a functor $D^!_c: \Sch^{\op} \to \Catinfty$,
	\item right Kan extend to a functor $D^!_c: \Stk^{\op} \to \Catinfty$,
	\item ind-complete to a functor $D^!: \Stk^{\op} \to \Pr^L$. 
\end{enumerate}
The value $D^!(Z)$ of this last functor on an arbitrary stack $Z$ is the category of ind-constructible $!$-sheaves on $Z$, specifically its renormalized version in the sense of \cite[Sec. 12]{AG15}. 

The category of $*$-sheaves may be defined in a dual fashion \cite[Sec. 2.5]{Ras14b}, but here we describe a variant adapted to our needs. Starting from $D^*_c$ we 
\begin{enumerate}
	\item left Kan extend to a functor $D_{c}^*: \Sch^{\op} \to \Catinfty$,
	\item right Kan extend to a functor $D_{c}^*: \GStk^{\op} \to \Catinfty$,
	\item ind-complete to a functor $D^*: \GStk^{\op} \to \Pr^L$,
	\item pass to right adjoints to obtain a functor $D_*: \GStk \to \Pr^L$,
	\item left Kan extend to a functor $D_*: \indGStk \to \PrL$. 
\end{enumerate}
Note that the right adjoints in (4) are still valued in $\Pr^L$ since those in (3) preserve compact objects. 

Write $D(Z)$ for the category assigned by (5) to an ind-geometric stack $Z$, and observe that if $Z$ is a geometric stack then $D(Z)$ is simply the category of $*$-sheaves on $Z$. Moreover, arguments adapted straightforwardly from \cite{Ras14b,BKV22} show that the categories $D(Z)$ and the associated pushforwards $f_*: D(Z') \to D(Z)$ have all the familiar sheaf theoretic properties needed for the constructions in \cite{BFN19} and the body of this paper. In particular, (1) if $f$ is ind-proper then $f_*$ admits a right adjoint $f^!$, (2) if $f$ is pro-smooth then $f_*$ has a left adjoint $f^*$, and (3) in the latter case $f^*$ satisfies base change against the other sheaf operations (and thus the results \cite[Thm. 3.4.1, Prop. 7.2]{BernsteinLuntzEquivariantSheavesandFunctors} hold in this setting). 

\subsection{Placidity} While $D^!(Z)$ and $D(Z)$ need not be equivalent in general, they are when $Z$ is placid, as in \cite[Sec. 4]{Ras14b} or \cite[Sec. 1.3]{BKV22}. When $Z$ is a qcqs scheme, placidity is the condition that it can be written as an inverse limit of finite-type schemes along smooth maps. Notably for us, all spaces relevant to Coulomb branches are placid; for example, the placidity of $\cR_{G,N}$ follows from the discussion of \cite[Sec. 2(i)]{BFN18}. 

More precisely, for placid $Z$ there is an equivalence $D^!(Z) \cong D(Z)$ which is canonical up to a discrete choice. The possible choices are distinguished by where $\omega_Z := p^!(\bC_\pt) \in D^!(Z)$ is sent (here $p: Z \to \pt)$. Its image $\omega_Z^{ren} \in D(Z)$ is referred to as the associated renormalized dualizing complex; note that in general $D(Z)$ does not admit a good notion of dualizing complex, since the functor $p^!$ is not a priori defined for $*$-sheaves. Any placid \emph{scheme} has a canonical renormalized dualizing complex, which in the case of $N_\cO$ is $\omega^{ren}_{N_\cO} \cong \bC_{N_\cO}$. It is this relationship which following \cite{BFN18} we would informally write as $\omega_{N_\cO} \cong \bC_{N_\cO}[2 \dim N_\cO]$. 

\subsection{Coulomb branches and ring objects}
Fixing again a reductive group $G$ and representation $N$, we recall the description of Coulomb branches via ring objects in the derived Satake category \cite{BFN19}. We first fix the renormalized dualizing complex $\omega_{G,N}^{ren} := q^!(\C_{N_\cO/\hG_\cO})\in D_{\hG_\cO}(\cR_{G,N})$, where $q$ denotes the (ind-proper) projection $\cR_{G,N} \to \Gr_G \times N_\cO \to N_\cO$ (and its quotient by $\hG_\cO$). The degree normalizations used in \cite[Sec. 2(ii)]{BFN18} to define $H_\bullet^{\hG_\cO}(\cR_{G,N})$ via finite-type approximations are encoded in this choice of $\omega^{ren}_{G,N}$, in the sense that we indeed recover $H_\bullet^{\hG_\cO}(\cR_{G,N})$ as the cohomology of $\omega_{G,N}^{ren}$. 

The pushforward $\cA_{G,N}:= \pi_*(\omega_{G,N}^{ren}) \in D({}_G \Gr_G)$ along the projection $\pi: \cR_{G,N} \to \Gr_G$ has the structure of a ring object with respect to  convolution in $D({}_G \Gr_G)$ \cite[Prop. 2.1]{BFN19}. Further pushing forward to a point of course recovers $H_\bullet^{\hG_\cO}(\cR_{G,N})$, but one of the key ideas of \cite{BFN19} is that ring objects in $D({}_G \Gr_G)$ provide a general setting for defining and studying Coulomb branches that do not arise from the construction of \cite{BFN18}. 

%It follows directly from the discussion of \cite[Sec. 2(i)]{BFN18} that $\cR_{G,N}$ is placid in the sense of \cite[Sec. 6.8]{Ras14b}. The choice of renormalized dualizing complex $\omega_{G,N}$ above specifies an identification $D_{\hG_\cO}(\cR_{G,N}) \cong D^!_{\hG_\cO}(\cR_{G,N})$ which identifies $\omega_{G,N}$ with the absolute dualizing complex $\omega_{G,N}^! := p^!(\bC_\pt)\in D^!_{\hG_\cO}(\cR_{G,N})$ (here $p: \cR_{G,N}/\hG_\cO \to \pt$ is projection to a point). Using this identification, we could change our conventions to work instead with $!$-sheaves throughout. While this would result in no choices being needed to define the dualizing complex (since the functor $p^!$ is defined here), the choices of degree shifts used to define $H_\bullet^{\hG_\cO}(\cR_{G,N})$ would instead manifest as choices needed to define the cohomology of $\omega_{G,N}^!$ (since $p_*$ is not defined here). The interplay between $!$-sheaves and $*$-sheaves is thus a kind of bookkeeping issue, which manifests in \cite{BFN18,BFN19} as the infinite shifts ``$[2 \dim G_\cO]$'', etc., which appear throughout. 

\section{Convolution algebras and base change}\label{sec:basechange}

As mentioned in the introduction, to prove our main theorems we will use a certain base change result for ring objects in the derived Satake categories of different groups, formulated as Proposition~\ref{prop:algiso} below. This is to be expected given related results in the literature (e.g. it is a relative of both \cite[Lem. 2.9]{BFN19} and \cite[Lem. 4.2.1]{BDFRT22}), but we include a complete proof in the absence of an explicit reference. 

We begin by fixing a Cartesian diagram of reductive groups as follows.
\begin{equation}\label{eq:groups}
	\begin{tikzpicture}[baseline=(current  bounding  box.center),thick,>=\arrtip]
		\newcommand*{\ha}{3}; \newcommand*{\va}{-1.5}; 
		
		\node (aa) at (0,0) {$\wL$};
		\node (ab) at (\ha,0) {$\wG$};
		\node (ba) at (0,\va) {$L$};
		\node (bb) at (\ha,\va) {$G$};
		
		\draw[->] (aa) to node[above,pos=.5] {$ $} (ab);
		\draw[->] (ba) to node[above,pos=.5] {$ $} (bb);
		\draw[->] (aa) to node[right,pos=.5] {$ $} (ba);
		\draw[->] (ab) to node[right,pos=.5] {$ $} (bb);
	\end{tikzpicture}
\end{equation}
We assume that the right arrow is a quotient map, hence so is the left arrow. In the next section $L$ will be a Levi subgroup of $G$, but for now we will not use this. 

The diagram (\ref{eq:groups}) induces a corresponding diagram of loop-equivariant affine Grassmannians. We factor this as follows, using the notation of Section \ref{sec:liecon}. 
\begin{equation}\label{eq:equivgrass}
	\begin{tikzpicture}[baseline=(current  bounding  box.center),thick,>=\arrtip]
		\newcommand*{\ha}{3}; \newcommand*{\hb}{3};
		\newcommand*{\va}{-1.5}; \newcommand*{\vb}{-1.5}; 
		
		%top level nodes
		\node (aa) at (0,0) {${}_{\wL}\Gr_{\wL}$};
		\node (ab) at (\ha,0) {${}_{\wL}\Gr_{\wG}$};
		\node (ac) at (\ha+\hb,0) {${}_{\wG}\Gr_{\wG}$};
		
		%bottom level nodes
		\node (ba) at (0,\va) {${}_{L}\Gr_{L}$};
		\node (bb) at (\ha,\va) {${}_{L}\Gr_{G}$};
		\node (bc) at (\ha+\hb,\va) {${}_{G}\Gr_{G}$};
		
		%horizontal arrows
		\draw[->] (aa) to node[above,pos=.5] {$i' $} (ab);
		\draw[->] (ab) to node[above,pos=.5] {$j' $} (ac);
		\draw[->] (ba) to node[above,pos=.5] {$i $} (bb);
		\draw[->] (bb) to node[above,pos=.5] {$j $} (bc);
		
		%vertical arrows
		\draw[->] (aa) to node[right,pos=.5] {$p'' $} (ba);
		\draw[->] (ab) to node[right,pos=.5] {$p' $} (bb);
		\draw[->] (ac) to node[right,pos=.5] {$p $} (bc);
	\end{tikzpicture}
\end{equation}
In particular, the forgetful functor from $\hG_\cO$-equivariant constructible sheaves on $\Gr_G$ to $\hL_\cO$-equivariant sheaves will be denoted by $j^*$. 

As remarked in \cite[Sec. 5(xi)]{BFN19}, if $\cA \in D({}_{\wG}\Gr_{\wG})$ has the structure of a ring object with respect to convolution, its pushforward $p_{*}(\cA) \in D({}_{G}\Gr_{G})$ naturally inherits a ring structure, as does $i'^! j'^*(\cA) \in D({}_{\wL}\Gr_{\wL})$. The desired claim is that we obtain the same ring object in $D({}_{L}\Gr_{L})$ if we apply both constructions in either of the two possible orders. 

\begin{Proposition}\label{prop:algiso}
	Given a ring object $\cA \in D({}_{\wG}\Gr_{\wG})$, there is a canonical isomorphism \begin{equation}\label{eq:algebraiso}
		 p''_{*} i'^! j'^*(\cA) \cong i^!j^*p_{*}(\cA)
	\end{equation}
	of ring objects in in  $D({}_{L}\Gr_{L})$. 
\end{Proposition}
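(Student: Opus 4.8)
The plan is to establish the isomorphism (\ref{eq:algebraiso}) at the level of underlying objects first, ignoring the ring structures, and then to promote it to an isomorphism of ring objects by checking compatibility with convolution. For the object-level statement, the key is that the square formed by the left-hand column of (\ref{eq:equivgrass})—that is, the square with corners ${}_{\wL}\Gr_{\wL}$, ${}_{\wL}\Gr_{\wG}$, ${}_{L}\Gr_{L}$, ${}_{L}\Gr_{G}$, with horizontal maps $i', i$ and vertical maps $p'', p$ (wait: $p''$ and, via the middle column, $p'$)—is Cartesian, since it is obtained by applying $\hL_\cO \backslash (-) / \hG_\cO$ resp. $\hL_\cO \backslash (-)/\hL_\cO$ to the Cartesian square (\ref{eq:groups}) of groups after passing to loops. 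More precisely, I would factor the passage from $p_*(\cA)$ to the right-hand side of (\ref{eq:algebraiso}) through the middle column: first observe $j^* p_*(\cA) \cong p'_* j'^*(\cA)$ by smooth base change along the right-hand Cartesian square (the map $j$ is smooth, being a quotient by a pro-smooth group, so $j^*$ satisfies base change against the pushforward $p$), and then observe $i^! p'_*(-) \cong p''_* i'^!(-)$ by the base-change property of $p''$, which is ind-proper-like along the fibers—here one uses that $i'$ is ind-proper or that $p'', p$ are ind-proper (the fibers of ${}_{\wG}\Gr_{\wG} \to {}_{G}\Gr_{G}$ are classifying stacks of the kernel group, which is proper being a quotient-kernel of reductive groups, i.e. a torus or finite group), so $i^!$ commutes with $p''_*$. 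Composing, $i^! j^* p_*(\cA) \cong i^! p'_* j'^*(\cA) \cong p''_* i'^! j'^*(\cA)$.

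The second step is to upgrade this to an isomorphism of ring objects. The ring structures on $p_*(\cA)$, $p''_* i'^! j'^*(\cA)$, etc., all arise from lax monoidality of the relevant pushforward and !-restriction functors between the convolution-monoidal categories $D({}_H\Gr_H)$, as recalled in the excerpt (following \cite[Sec. 5(xi)]{BFN19} and using the lax monoidal structure on $D: \Corr(\indGStk)_{sm;all} \to \PrL$). So the content is that each of the two isomorphisms above—the base-change isomorphism $j^*p_* \cong p'_* j'^*$ and the isomorphism $i^! p'_* \cong p''_* i'^!$—is an isomorphism of lax monoidal functors (equivalently, that the base-change $2$-cells are compatible with the lax structure maps). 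I would argue this by expressing all four composite functors as instances of the functor $D$ applied to morphisms in $\Corr(\indGStk)_{sm;all}$: the pushforwards $p_*$ etc. are $D$ applied to the correspondence with identity left leg, and the !-restrictions $i^!$ etc. are $D$ applied to the correspondence with identity right leg (using smoothness of the $j$'s for the left legs to live in the $sm$ part), and then the desired compatibility is exactly the statement that composition of correspondences is computed by the Cartesian squares above, which the formalism of $D$ as a (lax monoidal) functor out of the correspondence category encodes automatically. This reduces the ring-object statement to the object-level base-change statement plus formal naturality.

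The main obstacle I expect is bookkeeping the base-change and properness hypotheses carefully enough in the ind-geometric setting: one must verify that $j: {}_L\Gr_G \to {}_G\Gr_G$ (and its primed analogue) is genuinely smooth in the sense of the conventions section—so that $j^*$ satisfies base change against arbitrary pushforwards—and that the vertical maps $p, p', p''$ (quotients by the pro-smooth group attached to $\ker(\wG \to G)$, which after loops is $\hK_\cO$ for $K$ a torus or finite group) are ind-proper in the relevant sense, so that $i^!$ commutes with $p''_*$. Since $K$ is a diagonalizable/finite group, $BK_\cO$ is well-behaved and these should hold, but articulating them within the stated formalism is where the real care is needed; the lax-monoidality compatibility, by contrast, should be essentially formal once everything is phrased through $\Corr(\indGStk)_{sm;all}$. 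A secondary subtlety is ensuring the two intermediate paths agree (a pentagon/coherence check), which at the classical ring-object level—ignoring higher coherence data, as the excerpt permits—amounts to a finite diagram chase.
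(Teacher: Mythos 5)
Your object-level step is the same as the paper's: both squares in (\ref{eq:equivgrass}) are Cartesian (the paper deduces this from $B\wL \cong BL \times_{BG} B\wG$, which holds because $\wG \to G$ is a quotient map, by taking maps from the formal disk and the formal bubble), and the isomorphism is the composite $p''_{*} i'^! j'^* \cong i^! p'_* j'^* \cong i^! j^* p_*$ of a proper and a smooth base change. One correction: the middle isomorphism is justified by the ind-properness of $i$ and $i'$ (so that $i^!$ base-changes against arbitrary pushforwards), not by any properness of $p$, $p'$, $p''$. The kernel of $\wG \to G$ in the application is $(GL_1 \times \cdots \times GL_{n-1}\times \G_m)/\G_m$, which is neither a torus nor finite, and in any case the classifying stack of the arc group of a positive-dimensional group is not proper; your first alternative (``$i'$ is ind-proper'') is the one that works.

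The genuine gap is in your second step. You propose to make the compatibility with the ring structures formal by realizing all four functors as values of $D$ on morphisms of $\Corr(\indGStk)_{sm;all}$. But $i^!$ is not of this form: in $\Corr(\indGStk)_{sm;all}$ a backward leg contributes a \emph{smooth *-pullback}, whereas $i$ is an ind-proper (ind-closed) immersion and the functor required is the exceptional pullback $i^!$, i.e.\ the right adjoint of $i_*$. The lax monoidality of $i^!j^*$ for convolution is manifest only in the dual formalism $\Corr(\indGStk)_{all,prop}$ (!-sheaves), while that of $p_*$ is manifest only in $\Corr(\indGStk)_{sm,all}$ (*-sheaves); no single correspondence functor containing both functorialities is constructed here or in the cited references. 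The paper's closing remarks in this section make exactly this point: such a combined functor $D\colon \Corr(\indGStk^{\omega})_{fwd,bkwd} \to \PrL$ would give an immediate formal proof, but it is not available, and the proof instead constructs the needed piece of coherence by hand. Concretely, one must unwind both lax structure maps into explicit composites of unit/counit and base-change $2$-cells (the string diagrams (\ref{eq:stringdiags})) and check they agree via four local Yang--Baxter moves, one for each cube in (\ref{eq:fourcubes}), each reducing to compatibility of base change with composition. What you defer as a ``secondary subtlety\dots finite diagram chase'' is in fact the main content of the proposition, and the formal reduction you propose in its place does not go through as stated.
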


Note that $i^! j^*$ can be interpreted as a renormalized !-pullback $(ji)^!_{ren}$ in the sense of \cite[Sec. 4.9]{Ras14b}. However, the claim is ultimately an identity between two natural transformations, and to define and analyze these transformations we must express them in terms of adjunctions involving ind-proper !-pullback and smooth *-pullback. We discuss this point some more at the end of the section. We also clarify again that when discussing ring objects we only consider them at the classical level, i.e. we prove the above statement interpreting $D({}_{L}\Gr_{L})$ as a triangulated category rather than its underlying stable $\infty$-category. 

Before proving Proposition \ref{prop:algiso} we explain the two ring object constructions explicitly. They are induced by a pair of natural transformations
\begin{equation}\label{eq:laxmaps}
	p_{*}(-) \conv p_{*}(-) \to p_{*}(- \conv - ), \quad i^! j^*(-) \conv i^! j^*(-) \to i^! j^*(- \conv - ). 
\end{equation}
Given these, a multiplication $m: \cA \conv \cA \to \cA$ induces a multiplication on $p_{*}(\cA)$ via
$$ p_{*}(\cA) \conv p_{*}(\cA) \to p_{*}(\cA \conv \cA) \xrightarrow{p_{*}(m)} p_{*}(\cA), $$
similarly for $i^! j^*p_{*}(\cA)$. For details on the associativity of these multiplications we refer to \cite[Sec. 2.2]{Mac23}, these not being directly relevant to the proof of Proposition \ref{prop:algiso}. 

To define the left transformation in (\ref{eq:laxmaps}), consider the following diagram.
\begin{equation}\label{eq:pconvsquare}
\begin{tikzpicture}[baseline=(current  bounding  box.center),thick,>=\arrtip]
	\newcommand*{\ha}{3.4}; 
	\newcommand*{\va}{-1.5}; \newcommand*{\vb}{-1.5}; 
	
	\node (aa) at (0,0) {${}_{\wG}\Gr_{\wG} \tighttimes {}_{\wG}\Gr_{\wG}$};
	\node (ab) at (\ha,0) {${}_{G}\Gr_{G} \tighttimes {}_{G}\Gr_{G}$};
	\node (ba) at (0,\va) {${}_{\wG}\Gr_{\wG}\Gr_{\wG}$};
	\node (bb) at (\ha,\va) {${}_{G}\Gr_{G}\Gr_{G}$};
	\node (ca) at (0,\va+\vb) {${}_{\wG}\Gr_{\wG}$};
	\node (cb) at (\ha,\va+\vb) {${}_{G}\Gr_{G}$};
	
	%horizontal arrows
	\draw[->] (aa) to node[above,pos=.5] {$p \tighttimes p $} (ab);
	\draw[->] (ba) to node[above,pos=.5] {$q $} (bb);
	\draw[->] (ca) to node[above,pos=.5] {$p $} (cb);
	
	%vertical arrows
	\draw[<-] (aa) to node[left,pos=.5] {$d_{\wG} $} (ba);
	\draw[<-] (ab) to node[right,pos=.5] {$d_G $} (bb);
	\draw[->] (ba) to node[left,pos=.5] {$m_{\wG} $} (ca);
	\draw[->] (bb) to node[right,pos=.5] {$m_G $} (cb);
\end{tikzpicture}
\end{equation}
Here and below ${}_{G}\Gr_{G}\Gr_{G} := \hG_\cO \backslash \hG_\cK \times_{\hG_\cO} \hG_\cK/\hG_\cO$, etc., denote the loop-equivariant convolution spaces and their variants. The vertical maps are those defining the convolution product (so that $\cF \conv \cG := m_{G*} d_G^*(\cF \boxtimes \cG)$ for $\cF, \cG \in D({}_G \Gr_G)$), and the map $q$ is induced by $\wG \to G$. By adjunction the upper square induces a natural transformation
$$ m_{G*} d^*_G (p \tighttimes p)_* \to m_{G*} q_* d^*_{\wG} \cong p_* m_{\wG*} d^*_{\wG}, $$
yielding the desired transformation
$$ p_{*}(-) \conv p_{*}(-) \cong m_{G*} d^*_G (p \tighttimes p)_*(- \boxtimes -)  \to p_* m_{\wG*} d^*_{\wG} (- \boxtimes -) \cong p_{*}(- \conv - ). $$ 

To define the right transformation in (\ref{eq:laxmaps}), we consider the following diagram.
\begin{equation}\label{eq:ijconvsquare}
	\begin{tikzpicture}[baseline=(current  bounding  box.center),thick,>=\arrtip]
		\newcommand*{\ha}{3.4}; \newcommand*{\hb}{3.3};
		\newcommand*{\va}{-1.5}; \newcommand*{\vb}{-1.5}; 
		
		%top level nodes
		\node (aa) at (0,0) {${}_{L}\Gr_{L} \tighttimes {}_{L}\Gr_{L}$};
		\node (ab) at (\ha,0) {${}_{L}\Gr_{G} \tighttimes {}_{G}\Gr_{G}$};
		\node (ac) at (\ha+\hb,0) {${}_{G}\Gr_{G} \tighttimes {}_{G}\Gr_{G}$};
		
		%middle level nodes
		\node (ba) at (0,\va) {${}_{L}\Gr_{L}\Gr_{L}$};
		\node (bb) at (\ha,\va) {${}_{L}\Gr_{G}\Gr_{G}$};
		\node (bc) at (\ha+\hb,\va) {${}_{G}\Gr_{G}\Gr_{G}$};
		
		%bottom level nodes
		\node (ca) at (0,\va+\vb) {${}_{L}\Gr_{L}$};
		\node (cb) at (\ha,\va+\vb) {${}_{L}\Gr_{G}$};
		\node (cc) at (\ha+\hb,\va+\vb) {${}_{G}\Gr_{G}$};
		
		%back horizontal arrows
		\draw[->] (aa) to node[above,pos=.5] {$i \tighttimes ji $} (ab);
		\draw[->] (ab) to node[above,pos=.5] {$j \tighttimes \id $} (ac);
		\draw[->] (ba) to node[above,pos=.5] {$k $} (bb);
		\draw[->] (bb) to node[above,pos=.5] {$\ell $} (bc);
		\draw[->] (ca) to node[above,pos=.5] {$i $} (cb);
		\draw[->] (cb) to node[above,pos=.5] {$j $} (cc);
		
		%back vertical arrows
		\draw[<-] (aa) to node[right,pos=.5] {$d_L $} (ba);
		\draw[<-] (ab) to node[right,pos=.5] {$d'_G $} (bb);
		\draw[<-] (ac) to node[right,pos=.5] {$d_G $} (bc);
		\draw[->] (ba) to node[right,pos=.5] {$m_L $} (ca);
		\draw[->] (bb) to node[right,pos=.5] {$m'_G $} (cb);
		\draw[->] (bc) to node[right,pos=.5] {$m_G $} (cc);
	\end{tikzpicture}
\end{equation}
We first note there is a canonical isomorphism 
\begin{equation}\label{eq:shriekstariso}
	d^*_L (i \times i)^! (\id \times j)^* \cong k^! d'^*_G.
\end{equation}
This can be identified with the isomorphism between the two renormalized !-pullbacks around the top left square in (\ref{eq:ijconvsquare}), but in the argument below we will use the fact that it can be expressed in terms of smooth base change as follows. We factor the upper left square as 
\begin{equation}\label{eq:antidiagcube}
	\begin{tikzpicture}[baseline=(current  bounding  box.center),thick,>=\arrtip]
		\newcommand*{\ha}{3.3}; \newcommand*{\hb}{3.3}; \newcommand*{\hc}{3.3};
		\newcommand*{\va}{-1.5}; \newcommand*{\vb}{-1.5}; 
		
		%top level nodes
		\node (aa) at (0,0) {${}_{L}\Gr_{L} \tighttimes {}_{L}\Gr_{L}$};
		\node (ab) at (\ha,0) {${}_{L}\Gr_{G} \tighttimes {}_{L}\Gr_{G}$};
		\node (ac) at (\ha+\hb,0) {$ $};
		\node (ad) at (\ha+\hb+\hc,0) {${}_{L}\Gr_{G} \tighttimes {}_{G}\Gr_{G}$};
		
		%mid level nodes
		\node (ba) at (0,\va) {$ $};
		\node (bb) at (\ha,\va) {${}_{L}\Gr_{L} \tighttimes {}_{L}\Gr_{G}$};
		\node (bc) at (\ha+\hb,\va) {${}_{L}\Gr_{L} \tighttimes {}_{G}\Gr_{G}$};
		\node (bd) at (\ha+\hb+\hc,\va) {$ $};
		
		%bottom level nodes
		\node (ca) at (0,\va+\vb) {${}_{L}\Gr_{L}\Gr_{L}$};
		\node (cb) at (\ha,\va+\vb) {${}_{L}\Gr_{L}\Gr_{G}$};
		\node (cc) at (\ha+\hb,\va+\vb) {$ $};
		\node (cd) at (\ha+\hb+\hc,\va+\vb) {${}_{L}\Gr_{G}\Gr_{G}.$};
		
		%horizontal arrows
		\draw[->] (aa) to node[above,pos=.5] {$i \tighttimes i $} (ab);
		\draw[->] (ab) to node[above,pos=.5] {$\id \tighttimes j $} (ad);
		\draw[->] (bb) to node[above,pos=.5] {$\id \tighttimes j $} (bc);
		\draw[->] (ca) to node[above,pos=.5] {$k_a $} (cb);
		\draw[->] (cb) to node[above,pos=.5] {$k_b $} (cd);
		
		%vertical arrows
		\draw[<-] (aa) to node[left,pos=.5] {$d_L $} (ca);
		\draw[<-] (ab) to node[right,pos=.5] {$i \tighttimes \id $} (bb);
		\draw[<-] (bb) to node[right,pos=.5] {$d''_G $} (cb);
		\draw[<-] (ad) to node[right,pos=.5] {$d'_G $} (cd);

		%diagonal arrows
		\draw[->] (aa) to node[below left,pos=.5] {$\id \tighttimes i $} (bb);
		\draw[->] (bc) to node[below right,pos=.5] {$i \tighttimes \id $} (ad);
	\end{tikzpicture}
\end{equation}
The two quadrilateral faces of this diagram are Cartesian, and the pentagonal face becomes a Cartesian square after composing $\id \tighttimes j$ and $d''_G$. We now obtain (\ref{eq:shriekstariso}) as the composition
%\begin{equation}\label{eq:threesbc} d_L^* (i \tighttimes i)^! (\id \tighttimes j)^* \cong k^!_a d''^*_G (i \tighttimes \id)^! (\id \tighttimes j)^* \cong k^!_a d''^*_G (\id \tighttimes j)^* (i \tighttimes \id)^! \cong k^!_a k^!_b d'^*_G 
%\end{equation} 
\begin{equation}\label{eq:threesbc} 
	\begin{aligned} 
		d_L^* (i \tighttimes i)^! (\id \tighttimes j)^* &\cong d_L^* (\id \times i)^! (i \times \id)^! (\id \tighttimes j)^* \\
	&\cong k^!_a d''^*_G (i \tighttimes \id)^! (\id \tighttimes j)^* \\
	&\cong k^!_a d''^*_G (\id \tighttimes j)^* (i \tighttimes \id)^!\\ &\cong k^!_a k^!_b d'^*_G, 
	\end{aligned}
\end{equation} 
where the first isomorphism is trivial and the rest arise from smooth base change. We then have a composition
$$ m_{L*} d^*_L (i \tighttimes i)^! (j \tighttimes j)^* \cong m_{L*} k^! d'^*_G (j \tighttimes \id)^* \to i^! m'_{G*} d'^*_G (j \tighttimes \id)^* \cong i^! j^* m_{G*} d^*_G, $$
where the first factor uses (\ref{eq:shriekstariso}), the second uses the base change map associated to the bottom left square in~(\ref{eq:ijconvsquare}), and the third uses the smooth base change isomorphism associated to the bottom right square (i.e. that forgetting equivariance commutes with pushforward). This now yields the desired transformation
$$ 
i^! j^*(-) \conv i^! j^*(-) \cong m_{L*} d^*_L (i \tighttimes i)^! (j \tighttimes j)^*(- \boxtimes -) \to i^! j^* m_{G*} d^*_G(- \boxtimes -) \cong i^! j^*(- \conv - ). $$ 

\begin{proof}[Proof of Proposition \ref{prop:algiso}]
We first note that both squares in (\ref{eq:equivgrass}) are Cartesian. Indeed, the diagram of classifying stacks associated to (\ref{eq:groups}) is Cartesian (i.e. $B\wL \cong BL \times_{BG} B\wG$) since $\wG \to G$ is a quotient map, hence $\wL\backslash \wG \cong L\backslash G$, hence the natural map factors as 
$$  B\wL \cong \wL\backslash \wG/\wG \cong L\backslash G/\wG \cong BL \times_{BG} B\wG. $$
It follows that $B\wL_\cO \cong BL_\cO \times_{BG_\cO} B\wG_\cO$ by taking mapping spaces from the formal disk $D$ (or repeating the argument), and that the outer square in (\ref{eq:equivgrass}) is Cartesian by taking mapping spaces from the formal bubble $D \cup_{D^\times} D$. But then $j'$ is the base change of $j$ since both are base changed from $BL_\cO \to BG_\cO$, hence the right square is Cartesian, hence so is the left. 

In particular, the isomorphism (\ref{eq:algebraiso}) may be defined as the composition 
$$ p''_{*} i'^! j'^*(\cA) \cong i^!p'_*j'^*(\cA) \cong i^!j^*p_*(\cA)$$
of proper and smooth base change isomorphisms. We want to show this identifies the induced multiplications on each side. That is, we wish to show it fits into a diagram 
	\begin{equation*}
		\begin{tikzpicture}[baseline=(current  bounding  box.center),thick,>=\arrtip]
			\newcommand*{\ha}{4.4}; \newcommand*{\hb}{3.4};
			\newcommand*{\va}{-1.5}; \newcommand*{\vb}{-1.5}; 
			
			%bottom level nodes
			\node (ba) at (0,\va) {$i^!j^*p_{*}(\cA) \conv i^!j^*p_{*}(\cA)$};
			\node (bb) at (\ha,\va) {$i^!j^*p_{*}(\cA \conv \cA)$};
			\node (bc) at (\ha+\hb,\va) {$i^!j^*p_{*}(\cA)$};
			
			%top level nodes
			\node (aa) at (0,0) {$p''_{*} i'^! j'^*(\cA) \conv p''_{*} i'^! j'^*(\cA)$};
			\node (ab) at (\ha,0) {$p''_{*} i'^! j'^*(\cA \conv \cA)$};
			\node (ac) at (\ha+\hb,0) {$p''_{*} i'^! j'^*(\cA)$};
			
			%horizontal arrows
			\draw[->] (aa) to node[above,pos=.5] {$ $} (ab);
			\draw[->] (ab) to node[above,pos=.5] {$ $} (ac);
			\draw[->] (ba) to node[above,pos=.5] {$ $} (bb);
			\draw[->] (bb) to node[above,pos=.5] {$ $} (bc);
			
			%vertical arrows
			\draw[->] (aa) to node[right,pos=.5] {$ $} (ba);
			\draw[->] (ab) to node[right,pos=.5] {$ $} (bb);
			\draw[->] (ac) to node[right,pos=.5] {$ $} (bc);
		\end{tikzpicture}
	\end{equation*}
	in $D({}_L\Gr_L)$, where the horizontal arrows on the left are given by the constructions described before the proof. The right square is just functoriality, so it remains to construct the left. 
	
	The two paths around this square arise from a pair of natural transformations 
	$$m_{L*}d^*_L (p'' \tighttimes p'')_* (i' \tighttimes i')^! (j' \tighttimes j')^*  \to i^! j^* p_{*} m_{\wG*} d^*_{\wG}.$$ 
	Unwinding the constructions, these transformations are respectively described by the following string diagrams, read top to bottom (for compactness we write $p''_{12}$ for $p'' \tighttimes p''$, $j'_1$ for $j' \tighttimes \id$, etc.).  
	\begin{equation}\label{eq:stringdiags}
		\begin{tikzpicture}[baseline=(current  bounding  box.center),thick,>=\arrtip]
			\newcommand*{\ha}{1.2}; 
			\newcommand*{\va}{-1}; 
			\newcommand*{\vb}{0.4}; 
			
			\node[matrix] at (0,0) {
				
				\node (aa) at (0*\ha,\vb) {$m_{L*}$};
				\node (ab) at (1*\ha,\vb) {$d^*_{L}$};
				\node (ac) at (2*\ha,\vb) {$p''_{12*}$};
				\node (ad) at (3*\ha,\vb) {$i'^{!}_{12}j'^{*}_2$};
				\node (ae) at (4*\ha,\vb) {$j'^*_1$};
				
				\node (ba) at (0*\ha,5*\va-\vb) {$i^!$};
				\node (bb) at (1*\ha,5*\va-\vb) {$j^*$};
				\node (bc) at (2*\ha,5*\va-\vb) {$p_{*}$};
				\node (bd) at (3*\ha,5*\va-\vb) {$m_{\wG*}$};
				\node (be) at (4*\ha,5*\va-\vb) {$d^*_{\wG}$};
				
				\node at (.6*\ha, .75*\va) {$q''_* $};
				\node at (2.2*\ha, .95*\va) {$d^*_{\wL} $};
				\node at (1.0*\ha, 1.85*\va) {$m_{\wL*} $};
				\node at (2.0*\ha, 1.85*\va) {$k'^! $};
				\node at (3.45*\ha, 1.7*\va) {$d'^*_{\wG} $};
				\node at (-.35*\ha, 2.5*\va) {$p''_{*} $};
				\node at (.6*\ha, 2.70*\va) {$i'^! $};
				\node at (2.45*\ha, 2.7*\va) {$m'_{G*} $};
				\node at (1.0*\ha, 3.6*\va) {$p'_{*} $};
				\node at (1.9*\ha, 3.5*\va) {$j'^* $};
				\node at (3.25*\ha, 2.7*\va) {$\ell'^* $};
				
				\foreach \ma/\na/\mb/\nb/\curvefactor in {0/1/3/4/0.7, 3/1/0/4/0.7, 1/0/4/3/0.7, 4/2/1/5/0.7, 2/0/0/2/0.7, 0/3/2/5/0.7} {
					\draw ({\ma*\ha}, {\na*\va}) to[out=270,in=90,looseness=\curvefactor] ({\mb*\ha}, {\nb*\va});
					
				}
				
				% draw vertical strings
				\foreach \m/\na/\nb in {0/0/1, 0/2/3, 0/4/5, 3/0/1, 3/4/5, 4/0/2, 4/3/5} {
					\draw ({\m*\ha}, {\na*\va}) to ({\m*\ha}, {\nb*\va});
				}\\
			};
			
			\node[matrix] at (7,0) {
				
				\node (aa) at (0*\ha,\vb) {$m_{L*}$};
				\node (ab) at (1*\ha,\vb) {$d^*_{L}$};
				\node (ac) at (2*\ha,\vb) {$p''_{12*}$};
				\node (ad) at (3*\ha,\vb) {$i'^{!}_{12}j'^{*}_2$};
				\node (ae) at (4*\ha,\vb) {$j'^*_1$};
				
				\node (ba) at (0*\ha,5*\va-\vb) {$i^!$};
				\node (bb) at (1*\ha,5*\va-\vb) {$j^*$};
				\node (bc) at (2*\ha,5*\va-\vb) {$p_{*}$};
				\node (bd) at (3*\ha,5*\va-\vb) {$m_{\wG*}$};
				\node (be) at (4*\ha,5*\va-\vb) {$d^*_{\wG}$};
				
				\node at (4.4*\ha, 2.5*\va) {$p_{12*} $};

				\node at (3.45*\ha, .75*\va) {$p'_{1*}p_{2*} $};
				\node at (1.7*\ha, .85*\va) {$i^!_{12}j^*_2 $};
				\node at (2.95*\ha, 1.85*\va) {$j^*_1 $};
				\node at (2.0*\ha, 1.85*\va) {$d'^*_G $};
				\node at (.55*\ha, 1.7*\va) {$k^! $};
				\node at (3.4*\ha, 2.70*\va) {$d^*_G $};
				\node at (1.65*\ha, 2.7*\va) {$\ell^* $};
				\node at (3.05*\ha, 3.65*\va) {$q_* $};
				\node at (2.1*\ha, 3.5*\va) {$m_{G*} $};
				\node at (.9*\ha, 2.7*\va) {$m'_{G*} $};

				\foreach \ma/\na/\mb/\nb/\curvefactor in {4/1/1/4/0.7, 1/1/4/4/0.7, 3/0/0/3/0.7, 0/2/3/5/0.7, 2/0/4/2/0.7, 4/3/2/5/0.7} {
					\draw ({\ma*\ha}, {\na*\va}) to[out=270,in=90,looseness=\curvefactor] ({\mb*\ha}, {\nb*\va});
					
				}
				
				% draw vertical strings
				\foreach \m/\na/\nb in {4/0/1, 4/2/3, 4/4/5, 1/0/1, 1/4/5, 0/0/2, 0/3/5} {
					\draw ({\m*\ha}, {\na*\va}) to ({\m*\ha}, {\nb*\va});
				}\\
			};
		\end{tikzpicture}
	\end{equation}
	These are respectively the dual graphs of the backward- and forward-facing exterior faces of the following composite of (\ref{eq:equivgrass}), (\ref{eq:pconvsquare}), and (\ref{eq:ijconvsquare}). 
	\begin{equation}\label{eq:fourcubes}
		\begin{tikzpicture}[baseline=(current  bounding  box.center),thick,>=\arrtip]
			\newcommand*{\ha}{2.0}; \newcommand*{\hb}{2.2}; 
			\newcommand*{\hc}{2.0}; \newcommand*{\hd}{2.2}; 
			\newcommand*{\he}{2.0};
			\newcommand*{\va}{-1.2}; \newcommand*{\vb}{-1.4}; 
			\newcommand*{\vc}{-1.2}; \newcommand*{\vd}{-1.4}; 
			\newcommand*{\ve}{-1.2}; 
			
			%top level nodes
			\node (ab) at (\ha,0) {${}_{\wL}\Gr_{\wL} \tighttimes {}_{\wL}\Gr_{\wL}$};
			\node (ad) at (\ha+\hb+\hc,0) {${}_{\wL}\Gr_{\wG} \tighttimes {}_{\wG}\Gr_{\wG}$};
			\node (af) at (\ha+\hb+\hc+\hd+\he,0) {${}_{\wG}\Gr_{\wG} \tighttimes {}_{\wG}\Gr_{\wG}$};
			\node (ba) at (0,\va) {${}_{L}\Gr_{L} \tighttimes {}_{L}\Gr_{L}$};
			\node (bc) at (\ha+\hb,\va) {${}_{L}\Gr_{G} \tighttimes {}_{G}\Gr_{G}$};
			\node (be) at (\ha+\hb+\hc+\hd,\va) {${}_{G}\Gr_{G} \tighttimes {}_{G}\Gr_{G}$};
			
			%middle level nodes
			\node (cb) at (\ha,\va+\vb) {${}_{\wL}\Gr_{\wL}\Gr_{\wL}$};
			\node (cd) at (\ha+\hb+\hc,\va+\vb) {${}_{\wL}\Gr_{\wG}\Gr_{\wG}$};
			\node (cf) at (\ha+\hb+\hc+\hd+\he,\va+\vb) {${}_{\wG}\Gr_{\wG}\Gr_{\wG}$};
			\node (da) at (0,\va+\vb+\vc) {${}_{L}\Gr_{L}\Gr_{L}$};
			\node (dc) at (\ha+\hb,\va+\vb+\vc) {${}_{L}\Gr_{G}\Gr_{G}$};
			\node (de) at (\ha+\hb+\hc+\hd,\va+\vb+\vc) {${}_{G}\Gr_{G}\Gr_{G}$};
			
			%bottom level nodes
			\node (eb) at (\ha,\va+\vb+\vc+\vd) {${}_{\wL}\Gr_{\wL}$};
			\node (ed) at (\ha+\hb+\hc,\va+\vb+\vc+\vd) {${}_{\wL}\Gr_{\wG}$};
			\node (ef) at (\ha+\hb+\hc+\hd+\he,\va+\vb+\vc+\vd) {${}_{\wG}\Gr_{\wG}$};
			\node (fa) at (0,\va+\vb+\vc+\vd+\ve) {${}_{L}\Gr_{L}$};
			\node (fc) at (\ha+\hb,\va+\vb+\vc+\vd+\ve) {${}_{L}\Gr_{G}$};
			\node (fe) at (\ha+\hb+\hc+\hd,\va+\vb+\vc+\vd+\ve) {${}_{G}\Gr_{G}$};
			
			%back horizontal arrows
			\draw[->] (ab) to node[above,pos=.5] {$i' \tighttimes j'i' $} (ad);
			\draw[->] (ad) to node[above,pos=.5] {$j' \tighttimes \id $} (af);
			\draw[->] (cb) to node[above,pos=.3] {$k' $} (cd);
			\draw[->] (cd) to node[above,pos=.3] {$\ell' $} (cf);
			\draw[->] (eb) to node[above,pos=.3] {$i'$} (ed);
			\draw[->] (ed) to node[above,pos=.3] {$j' $} (ef);
			
			%back vertical arrows
			%\draw[<-] (ab) to node[right,pos=.7] {$d_{\wL}$} (cb);
			%\draw[<-] (ad) to node[right,pos=.7] {$d'_{\wG} $} (cd);
			%\draw[<-] (af) to node[right,pos=.5] {$d_{\wG} $} (cf);
			%\draw[->] (cb) to node[right,pos=.7] {$m_{\wL} $} (eb);
			%\draw[->] (cd) to node[right,pos=.7] {$m'_{\wG} $} (ed);
			%\draw[->] (cf) to node[right,pos=.5] {$m_{\wG} $} (ef);
			\draw[<-] (ab) to node[right,pos=.7] {$ $} (cb);
			\draw[<-] (ad) to node[right,pos=.7] {$ $} (cd);
			\draw[<-] (af) to node[right,pos=.5] {$ $} (cf);
			\draw[->] (cb) to node[right,pos=.7] {$ $} (eb);
			\draw[->] (cd) to node[right,pos=.7] {$ $} (ed);
			\draw[->] (cf) to node[right,pos=.5] {$ $} (ef);
			
			%diagonal arrows
			\draw[->] (ab) to node[above left, pos=.7] {$p'' \tighttimes p'' $} (ba);
			\draw[->] (ad) to node[above left, pos=.85] {$p' \tighttimes p $} (bc);
			\draw[->] (af) to node[above left, pos=.85] {$p \tighttimes p$} (be);
			\draw[->] (cb) to node[above left, pos=.6] {$q'' $} (da);
			\draw[->] (cd) to node[above left, pos=.75] {$q' $} (dc);
			\draw[->] (cf) to node[above left, pos=.75] {$q $} (de);
			\draw[->] (eb) to node[above left, pos=.6] {$p'' $} (fa);
			\draw[->] (ed) to node[above left, pos=.75] {$p' $} (fc);
			\draw[->] (ef) to node[above left, pos=.75] {$p $} (fe);
			
			%front horizontal arrows
			\draw[-,line width=6pt,draw=white] (ba) to  (bc);
			\draw[->] (ba) to node[above,pos=.3] {$ $} (bc); %{$i'' $} (bc);
			\draw[-,line width=6pt,draw=white] (bc) to  (be);
			\draw[->] (bc) to node[above,pos=.3] {$ $} (be);
			\draw[-,line width=6pt,draw=white] (da) to  (dc);
			\draw[->] (da) to node[above,pos=.3] {$ $} (dc);
			\draw[-,line width=6pt,draw=white] (dc) to  (de);
			\draw[->] (dc) to node[above,pos=.3] {$ $} (de);
			\draw[-,line width=6pt,draw=white] (fa) to  (fc);
			\draw[->] (fa) to node[above,pos=.5] {$ $} (fc);
			\draw[-,line width=6pt,draw=white] (fc) to  (fe);
			\draw[->] (fc) to node[above,pos=.5] {$ $} (fe);
			
			%front vertical arrows
			\draw[-,line width=6pt,draw=white] (ba) to  (da);
			\draw[<-] (ba) to node[left] {$ $} (da);
			\draw[-,line width=6pt,draw=white] (da) to  (fa);
			\draw[->] (da) to node[left] {$ $} (fa);
			\draw[-,line width=6pt,draw=white] (bc) to  (dc);
			\draw[<-] (bc) to node[left] {$ $} (dc);
			\draw[-,line width=6pt,draw=white] (dc) to  (fc);
			\draw[->] (dc) to node[left] {$ $} (fc);
			\draw[-,line width=6pt,draw=white] (be) to  (de);
			\draw[<-] (be) to node[left] {$ $} (de);
			\draw[-,line width=6pt,draw=white] (de) to  (fe);
			\draw[->] (de) to node[left] {$ $} (fe);
		\end{tikzpicture}
	\end{equation}	
	The diagrams (\ref{eq:stringdiags}) differ by a sequence of four Yang-Baxter moves, corresponding to the four cubes in (\ref{eq:fourcubes}). It thus suffices to verify the local identities corresponding to each of these moves. But each of these follows from the compatibility of adjunctions and their associated base change maps with composition. For example, the leftmost move (corresponding to the bottom-left cube) follows since the identities $m_{L*}q''_* \cong p''_* m_{\wL *}$ and $m'_{G*}q'_* \cong p'_* m'_{\wG *}$ imply an identity of base change transformations:
		\begin{equation*}
		\begin{tikzpicture}
			[baseline=(current  bounding  box.center),thick,>=\arrtip]
			\newcommand*{\ha}{3}; \newcommand*{\hb}{3};
			\newcommand*{\va}{-1.5};
			\node (aa) at (0,0) {$m_{L*} q''_* k'^!$};
			\node (ab) at (\ha,0) {$m_{L*} k^! q'_*$};
			\node (ac) at (\ha+\hb,0) {$i^! m'_{G*} q'_*$};
			\node (ba) at (0,\va) {$p''_* m_{\wL*} k'^!$};
			\node (bb) at (\ha,\va) {$p''_* i'^! m'_{\wG*}$};
			\node (bc) at (\ha+\hb,\va) {$i^! p'_* m_{\wG*}.$};
			\draw[->] (aa) to node[above] {$\sim $} (ab);
			\draw[->] (ab) to node[above] {$ $} (ac);
			\draw[->] (ba) to node[above] {$ $} (bb);
			\draw[->] (bb) to node[above] {$\sim $} (bc);
			\draw[->] (aa) to node[below,rotate=90] {$\sim $} (ba);
			%\draw[->] (ab) to node[right] {$h' $} (bb);
			\draw[->] (ac) to node[below,rotate=90] {$\sim $} (bc);
		\end{tikzpicture} 
	\end{equation*}
	Only the top move (or top-left cube) is more involved, in that it follows by applying this principle three times, once for each of the smooth base change isomorphisms appearing in~(\ref{eq:threesbc}). 
\end{proof}

We close the section with some informal technical remarks. The category $D({}_{G}\Gr_{G})$ can be realized alternatively in terms of *-sheaves or !-sheaves, each highlighting complementary correspondence functorialities. In terms of correspondence categories we have functors
$$D^!: \Corr(\indGStk)_{all,prop} \to \PrL, \quad D_*: \Corr(\indGStk)_{sm,all} \to \PrL,$$
encoding that $D^!(-)$ admits !-pullback along arbitrary maps and pushforward along ind-proper maps (and that these satisfy base change), while $D_*(-)$ admits *-pullback along smooth maps and pushforward along arbitrary maps (and that these satisfy base change). These correspondence functorialities are elementary, in the sense that the relevant base change maps arise from adjunctions. 

Either correspondence functoriality can be used to describe convolution in $D({}_{G}\Gr_{G})$, since this only involves smooth pullback and ind-proper pushforward. On the other hand, the fact that convolution is compatible with $p_{*}$ is only manifest in terms of *-sheaves, in the sense that $p: {}_{\wG}\Gr_{\wG} \to {}_{G}\Gr_{G}$ is not ind-proper, hence it should induce a map of algebra objects in $\Corr(\indGStk)_{sm,all}$ but not $\Corr(\indGStk)_{all,prop}$ (more precisely, a lax map of algebra objects in suitable 2-categorical enhancements). Meanwhile, $(ji)^{\op}: {}_{G}\Gr_{G} \to {}_{L}\Gr_{L}$ induces a map of algebra objects in $\Corr(\indGStk)_{all,prop}$ but not $\Corr(\indGStk)_{sm,all}$, so the fact that convolution is compatible with $(ji)^!_{ren} = i^!j^*$ is only manifest in terms of !-sheaves (and is comparatively obscure in our use of *-sheaves above). 

It would be convenient to combine these two correspondence functors. 
That is, let $\indGStk^\omega$ denote the category of ind-geometric stacks $X$ equipped with a renormalized dualizing sheaf $\omega_X \in D_*(X)$, in the sense of an object such that the induced functor $D^!(X) \to D_*(X)$, $\cF \mapsto \cF \overset{!}{\otimes} \omega_X$ is an equivalence. Such an $\omega_X$ exists whenever $X$ is placid. We would like a functor
$ D: \Corr(\indGStk^\omega)_{fwd,bkwd} \to \PrL$
which takes $X \xleftarrow{h} Y \xrightarrow{f} Z$ to 
$$D_*(X) \cong D^!(X) \xrightarrow{h^!} D^!(Y) \cong D_*(Y) \xrightarrow{f_*} D_*(Z).$$
Here $fwd$ and $bkwd$ are some classes of maps containing $p$ and $ji$, and the correspondences considered should satisfy some compatibility with the renormalized dualizing sheaves. 

Such a functor would encode a package of coherence data expressing a compatibility between pushforward and renormalized !-pullback along the given classes of maps. The proof above essentially constructs a small piece of such a package in an ad hoc way (and suggests, for example, that we could take $fwd$ to be all maps and $bkwd$ to be maps factoring as an ind-proper map followed by a smooth map). 
In particular, such a functor would conveniently encapsulate the more awkward parts of the proof, which result from the explicit factorization of renormalized !-pullbacks into left and right adjoints of pushforwards. Moreover, a suitable 2-categorical enhancement would in principle allow for an immediate formal proof of the claim. 

\section{Proofs of main results}
We now turn to the proofs of Theorems \ref{thm:mainthmintro}, \ref{thm:Weylaction}, and \ref{thm:levithmintro}. For the rest of the paper we fix $n \in \N$, let $G = PGL_n$, let $T \subset G$ be its standard Cartan subgroup, and fix a standard Levi subgroup $L \subset G$. We set
\begin{equation}\label{eq:N}
N = \Hom(\C,\C^2) \oplus \Hom(\C^2,\C^3) \oplus \cdots \oplus \Hom(\C^{n-1}, \C^n),
\end{equation}
which is naturally a representation of $\Pi_{k=1}^n GL_k$. The diagonal subgroup $\G_m \subset \G_m^n \cong \Pi_{k=1}^n Z(GL_k)$ acts trivially, hence $N$ inherits an action of 
\begin{equation}\label{eq:wG}
	\wG := (GL_1 \times \GL_2 \times ... \times \GL_n)/\G_m.
\end{equation}
The projection $\Pi_{k=1}^n GL_k \to GL_n \to G$ factors through a projection $\wG \to G$.

We define a subgroup $\wL \subset \wG$ by the following Cartesian square of algebraic groups.
\begin{equation}\label{eq:Tgroupsquare}
	\begin{tikzpicture}[baseline=(current  bounding  box.center),thick,>=\arrtip]
		\newcommand*{\ha}{3}; \newcommand*{\hb}{3};
		\newcommand*{\va}{-1.5}; \newcommand*{\vb}{-1.5}; 
		
		%top level nodes
		\node (aa) at (0,0) {$\wL$};
		\node (ab) at (\ha,0) {$\wG$};
		
		%bottom level nodes
		\node (ba) at (0,\va) {$L$};
		\node (bb) at (\ha,\va) {$G$};
		
		%horizontal arrows
		\draw[->] (aa) to node[above,pos=.5] {$ $} (ab);
		\draw[->] (ba) to node[above,pos=.5] {$ $} (bb);
		
		%vertical arrows
		\draw[->] (aa) to node[right,pos=.5] {$ $} (ba);
		\draw[->] (ab) to node[right,pos=.5] {$ $} (bb);
	\end{tikzpicture}
\end{equation}
As in the previous section, we notate the associated diagram of loop-equivariant Grassmannians as
\begin{equation}\label{eq:equivgrassT}
	\begin{tikzpicture}[baseline=(current  bounding  box.center),thick,>=\arrtip]
		\newcommand*{\ha}{3}; \newcommand*{\hb}{3};
		\newcommand*{\va}{-1.5}; \newcommand*{\vb}{-1.5}; 
		
		%top level nodes
		\node (aa) at (0,0) {${}_{\wL}\Gr_{\wL}$};
		\node (ab) at (\ha,0) {${}_{\wL}\Gr_{\wG}$};
		\node (ac) at (\ha+\hb,0) {${}_{\wG}\Gr_{\wG}$};
		
		%bottom level nodes
		\node (ba) at (0,\va) {${}_{L}\Gr_{L}$};
		\node (bb) at (\ha,\va) {${}_{L}\Gr_{G}$};
		\node (bc) at (\ha+\hb,\va) {${}_{G}\Gr_{G},$};
		
		%horizontal arrows
		\draw[->] (aa) to node[above,pos=.5] {$i' $} (ab);
		\draw[->] (ab) to node[above,pos=.5] {$j' $} (ac);
		\draw[->] (ba) to node[above,pos=.5] {$i $} (bb);
		\draw[->] (bb) to node[above,pos=.5] {$j $} (bc);
		
		%vertical arrows
		\draw[->] (aa) to node[right,pos=.5] {$p'' $} (ba);
		\draw[->] (ab) to node[right,pos=.5] {$p' $} (bb);
		\draw[->] (ac) to node[right,pos=.5] {$p $} (bc);
	\end{tikzpicture}
\end{equation}
where e.g. ${}_L \Gr_G := \hL_\cO \backslash \hG_\cK / \hG_\cO$. 

Recall that $L$, being a standard Levi subgroup, corresponds to an ordered tuple $\vec{n} = (n_1,\dotsc,n_k)$ summing to $n$ for which $L$ has block diagonal decomposition
$$ L \cong (GL_{n_1} \times \cdots \times GL_{n_k})/\G_m.$$
Explicitly, if $\fl$ is the Lie algebra of $L$, let $e_{i_1}, \dotsc, e_{i_{k-1}} \subset \fu := \mathrm{Lie}(U)$ be the generators of $\fu$ not belonging to $\fl$, written with increasing indices. Then  $n_j = i_j - i_{j-1}$, where $i_0 = 0$ and $i_k = n$. 
In terms of quiver gauge theories, the pair $(\wL, N)$ is then associated to the quiver
\begin{equation*}
	\begin{tikzpicture}[baseline=(current bounding box.center), thick, >=Stealth]
		% Draw vertices
		\node[circle, draw, minimum size=30] (1) at (0,0) {$1$};
		\node[circle, draw, minimum size=30] (2) at (2,0) {$2$};
		%\node (dots) at (4,0) {$\dots$};
		\fill (3.8,0) circle (.03);
		\fill (4,0) circle (.03);
		\fill (4.2,0) circle (.03);
		\fill (8,-0.4) circle (.03);
		\fill (8,-0.6) circle (.03);
		\fill (8,-0.8) circle (.03);
		\node[circle, draw, minimum size=30] (n-1) at (6,0) {$n\! -\! 1$};
		\node [circle, draw, minimum size=30] (v1) at (8,1.6) {$n_1$};
		\node [circle, draw, minimum size=30] (v2) at (8,0.4) {$n_2$};
		\node [circle, draw, minimum size=30] (v4) at (8,-1.6) {$n_k$};
		
		% Draw edges
		\draw (2) -- (1);
		\draw (2) -- (3.6,0);
		\draw (n-1) -- (4.4,0);
		\draw (n-1) -- (v1);
		\draw (n-1) -- (v2);
		\draw (n-1) -- (v4);
	\end{tikzpicture}
\end{equation*}
In particular, when $L = T$ the resulting pair $(\wT,N)$ corresponds to the quiver
\begin{equation*}
	\begin{tikzpicture}[baseline=(current bounding box.center), thick, >=Stealth]
		% Draw vertices
		\node[circle, draw, minimum size=30] (1) at (0,0) {$1$};
		\node[circle, draw, minimum size=30] (2) at (2,0) {$2$};
		%\node (dots) at (4,0) {$\dots$};
		\fill (3.8,0) circle (.03);
		\fill (4,0) circle (.03);
		\fill (4.2,0) circle (.03);
		\fill (8,-0.4) circle (.03);
		\fill (8,-0.6) circle (.03);
		\fill (8,-0.8) circle (.03);
		\node[circle, draw, minimum size=30] (n-1) at (6,0) {$n\! -\! 1$};
		\node [circle, draw, minimum size=30] (v1) at (8,1.6) {$1$};
		\node [circle, draw, minimum size=30] (v2) at (8,0.4) {$1$};
		\node [circle, draw, minimum size=30] (v4) at (8,-1.6) {$1$};
		
		% Draw edges
		\draw (2) -- (1);
		\draw (2) -- (3.6,0);
		\draw (n-1) -- (4.4,0);
		\draw (n-1) -- (v1);
		\draw (n-1) -- (v2);
		\draw (n-1) -- (v4);
	\end{tikzpicture}
\end{equation*}
and when $L = G$ the resulting pair $(\wG,N)$ corresponds to the quiver 
\begin{equation*}
	\begin{tikzpicture}[baseline=(current bounding box.center), thick, >=Stealth]
		% Draw vertices
		\node[circle, draw, minimum size=30] (1) at (0,0) {$1$};
		\node[circle, draw, minimum size=30] (2) at (2,0) {$2$};
		%\node (dots) at (4,0) {$\dots$};
		\fill (3.8,0) circle (.03);
		\fill (4,0) circle (.03);
		\fill (4.2,0) circle (.03);
		\node[circle, draw, minimum size=30] (n-1) at (6,0) {$n\! -\! 1$};
		\node [circle, draw, minimum size=30] (v1) at (8,0) {$n$};
		
		% Draw edges
		\draw (2) -- (1);
		\draw (2) -- (3.6,0);
		\draw (n-1) -- (4.4,0);
		\draw (n-1) -- (v1);
	\end{tikzpicture}
\end{equation*}

Associated to these quiver gauge theories we have ring objects $\cA_{\wG,N} := \pi_*(\omega_{\wG,N})\in D({}_{\wG} \Gr_{\wG})$ and $\cA_{\wL,N} := \pi_*(\omega_{\wL,N})\in D({}_{\wL} \Gr_{\wL})$. These are related by the following statement (see also \cite[Sec. 5(xi)]{BFN19}) which, for example, follows from \cite[Lem. 4.2.1]{BDFRT22}, which states that both are obtained by !-restriction from the universal ring object on $\Gr_{\Sp_{N \oplus N^*}}$, hence they are related by !-restriction to each other. 

\begin{Proposition}\label{prop:!restrict}
There is an isomorphism $\cA_{\wL,N} \cong i'^! j'^*(\cA_{\wG,N})$ of ring objects in $D({}_{\wL} \Gr_{\wL})$. 
\end{Proposition}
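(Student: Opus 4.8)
The plan is to deduce the isomorphism from the characterization, following \cite[Lem.~4.2.1]{BDFRT22} and \cite[Sec.~5(xi)]{BFN19}, of $\cA_{\wG,N}$ and $\cA_{\wL,N}$ as renormalized $!$-restrictions of a single universal ring object. Set $V = N \oplus N^*$ with its tautological symplectic form. The group $\wG$ acts faithfully on $N$ --- the kernel of $\prod_{k=1}^n \GL_k \to \GL(N)$ is exactly the diagonal $\G_m$ that is divided out in the definition of $\wG$ --- so there are closed inclusions $\wL \subset \wG \subset \GL(N) \subset \Sp(V)$, and the homomorphism $\wL \to \Sp(V)$ factors through $\wG \to \Sp(V)$. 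Observe also that $\wL$ is a standard Levi subgroup of $\wG$ (it is obtained from $\wG$ by replacing its $\GL_n$ factor with the block-diagonal Levi cut out by $\vec n$), so $\Gr_{\wL} \hookrightarrow \Gr_{\wG}$ is a closed immersion and hence so is $i'$ in (\ref{eq:equivgrassT}). Passing to loop-equivariant Grassmannians, the map ${}_{\wL}\Gr_{\wL} \to {}_{\Sp(V)}\Gr_{\Sp(V)}$ induced by $\wL \to \Sp(V)$ factors as
\[ {}_{\wL}\Gr_{\wL} \xrightarrow{\ j'\circ i'\ } {}_{\wG}\Gr_{\wG} \longrightarrow {}_{\Sp(V)}\Gr_{\Sp(V)}, \]
with $j' \circ i'$ precisely the composite in (\ref{eq:equivgrassT}).

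The argument then runs as follows. By \cite[Lem.~4.2.1]{BDFRT22} there is a universal ring object $\cA_V \in D({}_{\Sp(V)}\Gr_{\Sp(V)})$ (the inner Hom of the constant sheaf on the arc space of a Lagrangian, which depends only on $V$ as a symplectic vector space) whose renormalized $!$-pullback along the map ${}_H\Gr_H \to {}_{\Sp(V)}\Gr_{\Sp(V)}$ associated to any $H \to \Sp(V)$ stabilizing the Lagrangian $N \subset V$ recovers the ring object $\cA_{H,N}$; this applies in particular to $H = \wG$ and to $H = \wL$. Now recall from \cite[Sec.~4.9]{Ras14b} that $i'^! j'^*$ is the renormalized $!$-pullback $(j'i')^!_{\mathrm{ren}}$, that renormalized $!$-pullback is functorial --- so $(g\circ f)^!_{\mathrm{ren}} \cong f^!_{\mathrm{ren}} \circ g^!_{\mathrm{ren}}$ --- and that it is lax monoidal for convolution: the forgetful functor $j'^*$ is strong monoidal, while the $!$-restriction $i'^!$ along the closed immersion $\Gr_{\wL} \hookrightarrow \Gr_{\wG}$ carries the lax monoidal structure constructed just before Proposition~\ref{prop:algiso} (the second transformation in (\ref{eq:laxmaps})). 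Applying functoriality to the factorization above then yields
\[ \cA_{\wL,N} \,\cong\, (j'i')^!_{\mathrm{ren}}\bigl( ({}_{\wG}\Gr_{\wG} \to {}_{\Sp(V)}\Gr_{\Sp(V)})^!_{\mathrm{ren}}\, \cA_V\bigr) \,\cong\, i'^! j'^*(\cA_{\wG,N}), \]
an isomorphism of ring objects in $D({}_{\wL}\Gr_{\wL})$, as desired.

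If one wishes to avoid the universal object, there is a more hands-on route, which I would include if a self-contained argument is wanted. Since $\wG_\cO$ preserves $N_\cO$, unwinding the defining Cartesian square of $\cR_{G,N}$ shows $\cR_{\wL,N} \cong \cR_{\wG,N} \times_{\Gr_{\wG}} \Gr_{\wL}$ (using that $\Gr_{\wL} \to \Gr_{\wG}$ is injective, equivalently $\wL_\cK \cap \wG_\cO = \wL_\cO$), and this square stays Cartesian after passing to the relevant equivariant Grassmannians. The isomorphism of underlying objects $i'^! j'^*(\cA_{\wG,N}) \cong \cA_{\wL,N}$ then follows by stringing together smooth base change for the forgetful functor (giving $j'^* \pi_* \cong \pi_* \,(\mathrm{forget})^*$), base change of $i'^!$ along the pushforward $\pi$ (valid since $i'$ is a closed immersion), and the facts that a renormalized dualizing sheaf pulls back to a renormalized dualizing sheaf along a smooth map and to the dualizing sheaf under $!$-pullback along a closed immersion; the component-wise cohomological shifts in $\omega_{\wG,N}$ and $\omega_{\wL,N}$ are arranged to be compatible with this restriction. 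The ring-structure compatibility would then be verified directly from the explicit lax monoidal structure on $i'^! j'^*$ of Section~\ref{sec:basechange}, by a string-diagram manipulation in the style of --- but considerably shorter than --- the proof of Proposition~\ref{prop:algiso}. In either approach the only genuinely delicate point is this compatibility with ring structures (together with the shift bookkeeping in the second approach); since \cite[Lem.~4.2.1]{BDFRT22} already encapsulates it, I would give the short proof and cite it, along with \cite[Sec.~5(xi)]{BFN19}, for the details.
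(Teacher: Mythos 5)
Your primary argument is exactly the paper's: the paper justifies Proposition~\ref{prop:!restrict} by citing \cite[Sec.~5(xi)]{BFN19} and \cite[Lem.~4.2.1]{BDFRT22}, noting that both $\cA_{\wG,N}$ and $\cA_{\wL,N}$ are obtained by !-restriction from the universal ring object on $\Gr_{\Sp_{N\oplus N^*}}$ and hence are related by !-restriction to each other. Your additional hands-on sketch is a reasonable supplement, but the route you ultimately recommend is the one the paper takes, so the proposal matches.
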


Next we recall the following result of Braverman-Finkelberg-Nakajima, which relates $\cA_{\wG,N}$ to the ring object $\cA_{reg} \in D({}_G \Gr_G)$ corresponding to the regular representation of $SL_n$ under geometric Satake. 

\begin{Theorem}\label{thm:BFNresult} \cite[Thm. 2.11]{BFN19} There is an isomorphism $p_*(\cA_{\wG,N}) \cong \cA_{reg}$ of ring objects in $D({}_G \Gr_G)$. 
\end{Theorem}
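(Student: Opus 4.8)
\emph{Proof sketch.} This is \cite[Thm.~2.11]{BFN19}; here is the shape of the argument. The first step is to work through geometric Satake for $\Gr_{G}=\Gr_{PGL_n}$ (with the loop rotation supplying the parameter $\hbar$): under this equivalence $\cA_{reg}$ is, by definition, the ring object attached to the coordinate algebra $\C[SL_n]=\bigoplus_\lambda V_\lambda\otimes V_\lambda^{*}$ of the Langlands dual group, so the assertion is that $p_{*}(\cA_{\wG,N})$ --- which is a commutative ring object by the lax monoidal formalism of Section~\ref{sec:basechange} (see also \cite[Sec.~5(xi)]{BFN19}) --- is likewise attached to $\C[SL_n]$ as an algebra object.

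First I would determine its underlying object. Since $\cR_{\wG,N}$ for the linear quiver $Q_{(n)}$ is explicit, the stalks of $\cA_{\wG,N}$ can be computed combinatorially; pushing forward along $p\colon{}_{\wG}\Gr_{\wG}\to{}_G\Gr_G$ and applying the decomposition theorem, one obtains $p_{*}(\cA_{\wG,N})\cong\bigoplus_\lambda\mathrm{IC}_\lambda\otimes M_\lambda$ and checks that $\dim M_\lambda=\dim V_\lambda$, i.e.\ that the graded character agrees with that of $\C[SL_n]$. A convenient companion statement is the $!$-restriction to the base point $\{1\}\subset\Gr_G$: for $\cA_{reg}$ this yields $\C[\cN]$ by \cite[Thm.~7.3.1]{ABG04}, while on the Coulomb side it is the assertion of \cite[Sec.~2(vi)]{BFN19} about the \emph{framed} version of $Q_{(n)}$, so that Theorem~\ref{thm:BFNresult} is the refinement in which the rank-$n$ vertex is gauged rather than framed.

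The genuinely hard part, and the one I expect to be the main obstacle, is to identify the \emph{algebra} structure, since knowing the underlying object does not determine the ring structure on it. To close this gap I would use that $\cA_{\wG,N}$ is the inner Hom of $\C_{N_\cO/\hG_\cO}$ in the $D({}_{\wG}\Gr_{\wG})$-module category $D(N_\cK/\hG_\cO)$ (\cite[Lem.~4.2.1]{BDFRT22}, \cite[Prop.~2.1]{BFN19}), so that its multiplication --- and hence that of $p_{*}(\cA_{\wG,N})$ --- is induced by composition; one then checks that this product forces $\Spec$ of the resulting $SL_n$-algebra $A$ to be a torsor over a point (equivalently $A^{SL_n}=\C$ together with a free action of dimension $\dim SL_n$), which identifies it with $\C[SL_n]$ under left translation, and finally that this is compatible with the loop-rotation grading. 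Carrying out this identification of the multiplication is the substance of \cite[Sec.~2]{BFN19}.
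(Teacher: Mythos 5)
The paper does not reprove this statement: it is quoted directly from \cite[Thm.~2.11]{BFN19}, and the only argument the paper itself supplies is the upgrade to loop equivariance. Namely, \cite[Thm.~2.11]{BFN19} is stated without the $\C^\times$ of loop rotation, and the paper observes that the equivariant version follows formally because both $p_*(\cA_{\wG,N})$ and $\cA_{reg}$ lie in the heart of the ind-perverse t-structure and the forgetful functor from $G_\cO\rtimes\C^\times$-equivariant to $G_\cO$-equivariant ind-perverse sheaves is fully faithful (\cite[Prop.~A.1]{MV07}). Your proposal instead sketches a proof of the cited theorem itself, which is a different and far more ambitious task; and on the one point where the paper does supply an argument, your sketch only says the identification is ``compatible with the loop-rotation grading'' without giving the mechanism. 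That full-faithfulness-on-hearts argument is the piece you would actually need to reproduce here.

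As for your sketch of the underlying \cite{BFN19} proof: the outline (identify the underlying object via the decomposition theorem and a character count, then separately pin down the ring structure, with the $!$-restriction to the base point recovering $\C[\cN]$ as a consistency check) is a reasonable caricature, but the step you flag as the hard one is also where your proposal is weakest. The claim that the inner-Hom description of the multiplication ``forces $\Spec A$ to be a torsor over a point'' is not an argument --- knowing $A^{SL_n}=\C$ and a dimension count does not by itself identify a commutative $SL_n$-algebra with $\C[SL_n]$ (one must rule out non-generically-free or non-reduced possibilities), and this is precisely the content that \cite{BFN19} establish by other means. Since the present paper treats the whole statement as a black box, none of this needs to be carried out here; but if you intend your sketch as a proof, the torsor step is a genuine gap.
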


More precisely, \cite[Thm. 2.11]{BFN19} as stated does not include loop equivariance, but the loop-equivariant version follows from it: both sides are in the heart  of the ind-perverse t-structure, and the forgetful functor from $G_\cO \rtimes \C^\times$-equivariant ind-perverse sheaves to $G_\cO$-equivariant ind-perverse sheaves is fully faithful \cite[Prop. A.1]{MV07}. 

Finally, we recall the following result of Ginzburg-Riche, which specifically is obtained by substituting $\cA_{reg}$ into the theorem of \cite[Section 1.7]{GR15} (and unwinding that the ring structure implicit there is given by the procedure described in the previous section). Here $D_\hbar(SL_n/U)$ is the ring of asymptotic differential operators on $SL_n/U$, i.e. the Rees algebra of the ordinary ring of differential operators with respect to its order filtration. We use $H^\bullet$ to denote total cohomology, i.e. pushing forward to a point and then summing cohomology groups. Note that this implicitly refers to equivariant cohomology when, as in the following statement, it is applied to sheaves on stacks. 

\begin{Theorem}\label{thm:GRresult} 
	When $L = T$ there is a ring isomorphism $H^\bullet i^!j^*(\cA_{reg}) \cong D_\hbar(SL_n/U)$. 
\end{Theorem}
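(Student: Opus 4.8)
The plan is to deduce the statement almost directly from the results of Ginzburg--Riche \cite{GR15}, the real work being to match their constructions with the functor $\pt_* i^! j^*$ and with the convolution-based ring structure of Section~\ref{sec:basechange}.

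First I would recall the precise form of \cite[Section~1.7]{GR15} that is needed. Via geometric Satake for $PGL_n$ (keeping track of loop-rotation equivariance), a ring object $\cA \in D({}_G\Gr_G)$ corresponds to an algebra object in the category of representations of the dual group $SL_n$, and to such data Ginzburg--Riche attach an associative algebra, realized as $\pt_*$ (equivalently $\hT_\cO$-equivariant cohomology) of the renormalized $!$-restriction of $\cA$ along the ind-closed embedding $\Gr_T \hookrightarrow \Gr_G$. Their computation identifies this algebra, when $\cA$ is the object attached to the regular representation $\C[SL_n]$, with the asymptotic differential operators $D_\hbar(SL_n/U)$, with the loop-rotation parameter playing the role of the Rees variable $\hbar$. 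Since $\cA_{reg}$ is by definition the Satake sheaf attached to $\C[SL_n]$, this already identifies the underlying vector spaces, once one checks the renormalization conventions agree (the component-wise shifts built into $\omega_{G,N}$ and into $i^!_{ren}$ being arranged for exactly this compatibility).

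Next I would unwind the functor $\pt_* i^! j^*$ on $D({}_G\Gr_G)$ in the case $L = T$: here $j \colon {}_T\Gr_G \to {}_G\Gr_G$ induces the forgetful / restriction-of-equivariance functor $j^*$ from $\hG_\cO$-equivariant to $\hT_\cO$-equivariant sheaves on $\Gr_G$; the map $i \colon {}_T\Gr_T \to {}_T\Gr_G$ is the ind-closed embedding of the affine Grassmannian of the torus, so that $i^!$ is renormalized $!$-restriction; and $\pt_*$ from ${}_T\Gr_T = \hT_\cO \backslash \hT_\cK / \hT_\cO$ computes $\hT_\cO$-equivariant (co)homology. Hence $\pt_* i^! j^*(\cA_{reg})$ is exactly the vector space that \cite{GR15} attaches to the regular representation.

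Finally, and this is where the actual content lies, I would verify that the two ring structures coincide. On our side the multiplication on $\pt_* i^! j^*(\cA_{reg})$ is the one induced from the convolution multiplication on $\cA_{reg}$ via the lax monoidal structure of $i^!j^* = (ji)^!_{ren}$ established in Section~\ref{sec:basechange}, followed by the (lax) monoidality of $\pt_*$, which carries convolution on ${}_T\Gr_T$ to the multiplication of $H^\bullet_{\hT_\cO}(\pt)$-algebras. On the Ginzburg--Riche side the product on $D_\hbar(SL_n/U)$ is likewise produced from convolution of these same sheaves, but their presentation may be packaged differently (e.g. through a fusion-type product or through an explicit model for the $T$-restriction), so one must trace through the definitions to see the two multiplications agree. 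I expect this comparison of two a priori different descriptions of the same product --- rather than any new geometric input --- to be the main obstacle; it is precisely the ``unwinding'' referred to in the sentence preceding the theorem. Granting it, the asserted ring isomorphism follows.
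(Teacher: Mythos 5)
Your proposal matches the paper's treatment: the paper likewise presents Theorem \ref{thm:GRresult} as a direct recollection of \cite[Section 1.7]{GR15} applied to $\cA_{reg}$, with the only remaining task being to unwind that the ring structure implicit there agrees with the convolution-based one from Section \ref{sec:basechange} --- exactly the comparison you single out as the main content. No further argument is given in the paper, so your account is essentially the same proof, slightly more expanded.
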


We now obtain Theorem \ref{thm:mainthmintro} by assembling these results with the result of the previous section. 

\begin{proof}[Proof of Theorem \ref{thm:mainthmintro}]
We have ring isomorphisms
\begin{align*}
\C_\hbar[\cM(Q_n)] &\cong H^\bullet(\cA_{\wT,N})\\ 
&\cong H^\bullet i'^!j'^*(\cA_{\wG,N})\\ 
&\cong H^\bullet  i^! j^* p_*(\cA_{\wG,N})\\ 
&\cong H^\bullet  i^! j^*(\cA_{reg})\\ 
&\cong D_\hbar(SL_n/U).
\end{align*}
The first is a definition, while the remaining ones follow respectively from Proposition \ref{prop:!restrict}, Proposition \ref{prop:algiso}, Theorem \ref{thm:BFNresult}, and Theorem \ref{thm:GRresult}. 
\end{proof}

We prove Theorem \ref{thm:Weylaction} together with a few other compatibilities of the isomorphism $\C_\hbar[\cM(Q_n)] \cong D_\hbar(SL_n/U)$, packaged together into Theorem \ref{thm:compatibilities} below. These compatibilities involve the action of $T^\vee \subset SL_n$ on $D_\hbar(SL_n/U)$ induced by its right action on $SL_n/U$. The data of this action is equivalent to that of the induced grading by~$P^\vee$, the weight lattice of $T^\vee$ and coweight lattice of $T$. The quantum moment map of this action also gives $D_\hbar(SL_n/U)$ the structure of a $\C_\hbar[\ft]$-algebra (where $\C_\hbar[\ft] \cong \C[\ft] \otimes \C[\hbar]$). It follows from \cite[Lem. 3.2.1]{GR15} that these structures intertwine the Gelfand-Graev $S_n$-action on $D_\hbar(SL_n/U)$ with the Weyl group $S_n$-actions on $P^\vee$ and $\C_\hbar[\ft]$. 

On the other hand, $\C_\hbar[\cM(Q_n)]$ has a grading by $\pi_0(\Gr_{\wT}) \cong \pi_1(\wT)$, hence by its quotient $P^\vee \cong \pi_1(T)$, and is an algebra over $\C_\hbar[\ft] \cong H^\bullet_{T \times \C^\times}(\pt)$. In other words, pushforward from $D({}_{\wT} \Gr_{\wT})$ to a point factors through $D({}_T \Gr_T)$, but $D({}_T \Gr_T) \to D(\pt)$ factors through $P^\vee$-graded $\C_\hbar[\ft]$-modules since ${}_T \Gr_T \cong P^\vee \times B\hT_\cO$. To make explicit the compatibility with the $S_n$-action, write $N_T \cong T \rtimes S_n \subset G$ for the normalizer of $T$ and $\wN_T$ for its preimage in $\wG$. Then likewise $D({}_{N_T} \Gr_{N_T}) \cong D((P^\vee \times B\hT_\cO)/S_n) \to D(BS_n)$ factors through $S_n$-equivariant $P^\vee$-graded $\C_\hbar[\ft]$-modules. But $\cA_{\wT,N}$ is obtained from $\cA_{\wN_T,N}$ by forgetting equivariance (note ${}_{\wN_T} \Gr_{\wN_T} \cong {}_{\wN_T} \Gr_{\wT}$), hence by base change its cohomology is obtained from the pushforward of $\cA_{\wN_T,N}$ to $BS_n$ by forgetting equivariance. 

\begin{Theorem}\label{thm:compatibilities}
	The isomorphism $\C_\hbar[\cM(Q_n)] \cong D_\hbar(SL_n/U)$ of Theorem \ref{thm:mainthmintro} is an isomorphism of $S_n$-equivariant $P^\vee$-graded $\C_\hbar[\ft]$-algebras. 
\end{Theorem}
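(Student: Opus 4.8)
The plan is to upgrade the chain of isomorphisms in the proof of Theorem~\ref{thm:mainthmintro} to a chain of isomorphisms of $S_n$-equivariant $P^\vee$-graded $\C_\hbar[\ft]$-algebras. The grading and the module structure are essentially automatic: applying $p''_*$ to the isomorphism of Proposition~\ref{prop:!restrict} and combining with Proposition~\ref{prop:algiso}, the chain of the proof of Theorem~\ref{thm:mainthmintro} can be rewritten as
\[ p''_*(\cA_{\wT,N}) \cong p''_* i'^!j'^*(\cA_{\wG,N}) \cong i^!j^*p_*(\cA_{\wG,N}) \cong i^!j^*(\cA_{reg}), \]
a chain of isomorphisms of ring objects in $D({}_{T}\Gr_{T})$, followed by $\pt_*$ and the Ginzburg-Riche isomorphism of Theorem~\ref{thm:GRresult}. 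By the discussion preceding the statement, $\pt_*$ from $D({}_T\Gr_T)$ factors through $P^\vee$-graded $\C_\hbar[\ft]$-modules, so these structures are automatically carried along; the only thing to add is that the isomorphism of Theorem~\ref{thm:GRresult} is itself one of $P^\vee$-graded $\C_\hbar[\ft]$-algebras, which is part of \cite{GR15}.

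For the $S_n$-action I would lift the whole chain to the normalizer of $T$. Write $N_T \subset G$ for this normalizer, so $N_T \cong T \rtimes S_n$, and $\wN_T \subset \wG$ for its preimage, so $\wN_T \cong \wT \rtimes S_n$ and the square with corners $\wN_T, \wG, N_T, G$ is Cartesian with both horizontal maps quotient maps; one then forms the analogue of~\eqref{eq:equivgrassT} with $(L,\wL)$ replaced by $(N_T, \wN_T)$. The key point is that Proposition~\ref{prop:algiso} applies to this situation verbatim: its proof used only that $\wG \to G$ is a quotient map together with the resulting Cartesian squares of loop-equivariant classifying stacks, all of which persist, the disconnectedness of $N_T$ being immaterial --- equivalently, one may read all of Section~\ref{sec:basechange} as happening $S_n$-equivariantly inside $D({}_{T}\Gr_{T})$ and its convolution variants. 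Similarly Proposition~\ref{prop:!restrict} holds $S_n$-equivariantly, since both $\cA_{\wN_T,N}$ and $i'^!j'^*(\cA_{\wG,N})$ arise by $S_n$-equivariant !-restriction from the universal ring object on $\Gr_{\Sp_{N \oplus N^*}}$ as in \cite[Lem.~4.2.1]{BDFRT22}, while Theorem~\ref{thm:BFNresult} needs no change, living in $D({}_{G}\Gr_{G})$. Pushing forward to $BS_n$ rather than to a point, this yields an $S_n$-equivariant chain whose underlying nonequivariant chain is the one proving Theorem~\ref{thm:mainthmintro}; and by the discussion preceding the statement the resulting $S_n$-action on $\C_\hbar[\cM(Q_n)]$ is precisely permutation of the $n$ rank-one vertices of~\eqref{eq:quiverQn} --- these being the diagonal torus $D_n \cong GL_1^n$ of the $GL_n$-factor of $\wG$, on which $S_n = N_{GL_n}(D_n)/D_n$ acts.

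What remains, and what I expect to be the crux, is to identify this $BS_n$-action on $\pt_* i^!j^*(\cA_{reg})$, under the isomorphism of Theorem~\ref{thm:GRresult}, with the Gelfand-Graev action on $D_\hbar(SL_n/U)$; on the geometric side the action in question is the manifest one induced by the Weyl group action on the torus affine Grassmannian $\Gr_{T}$, so what is needed is an $S_n$-equivariant enhancement of the Ginzburg-Riche isomorphism. This is genuinely the hard part, the Gelfand-Graev action being precisely the ``hidden'' symmetry whose transparency is the point of Theorem~\ref{thm:Weylaction}. One route is to check that the $S_n$-equivariance is already visible in the construction of \cite{GR15}. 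A second is a rigidity argument: both the geometric $W$-action and the Gelfand-Graev action are $\C_\hbar[\ft]$-compatible $P^\vee$-graded algebra actions inducing the standard $W$-action on $P^\vee$ and on the subalgebra $\C_\hbar[\ft] \subset D_\hbar(SL_n/U)$ (the latter by \cite[Lem.~3.2.1]{GR15}) and commuting with the left $SL_n$-action, so for each $w \in S_n$ their ratio is an $SL_n$-equivariant, $\C_\hbar[\ft]$-linear, $P^\vee$-grading-preserving algebra automorphism of $D_\hbar(SL_n/U)$, and one then checks that no nontrivial such automorphism exists (for instance via an explicit set of $\C_\hbar[\ft]$-algebra generators). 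Granting this identification, the three asserted compatibilities follow at once, and Theorem~\ref{thm:Weylaction} is the $S_n$-equivariance statement.
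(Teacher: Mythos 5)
Your proposal follows essentially the same route as the paper: upgrade the chain of Theorem \ref{thm:mainthmintro} by running it with $N_T$ and $\wN_T$ in place of $T$ and $\wT$, note that pushforward from $D({}_{N_T}\Gr_{N_T})$ factors through $S_n$-equivariant $P^\vee$-graded $\C_\hbar[\ft]$-modules, and then identify the resulting structures on $D_\hbar(SL_n/U)$ with the Gelfand-Graev and right $T^\vee$-actions. The final identification you flag as the crux is resolved in the paper exactly by your first suggested route, namely citing \cite[Thm. 2.5.5, Lem. 3.2.1]{GR15}, where the compatibility is already established.
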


\begin{proof}[Proof of Theorem \ref{thm:Weylaction}]
Repeating the proof of Theorem \ref{thm:mainthmintro} with $N_T$ and $\wN_T$ in place of $T$ and $\wT$ we obtain an isomorphism $p'_*(\cA_{\wN_T,N}) \cong i^! j^*(\cA_{reg})$ of ring objects in $D({}_{N_T} \Gr_{N_T})$. Following the remarks before the statement, the isomorphism between the cohomologies of these ring objects immediately enhances to an isomorphism of $S_n$-equivariant $P^\vee$-graded $\C_\hbar[\ft]$-algebras.  That the resulting structures on $D_\hbar(SL_n/U)$ are induced by the Gelfand-Graev action and the right $T^\vee$-action on $SL_n/U$ follows from \cite[Thm. 2.5.5, Lem. 3.2.1]{GR15}. 
\end{proof}

Next we recall the following generalization by Macerato of the semiclassical limit of Theorem~\ref{thm:GRresult}. Here we let $\psi \in \fu^*$ denote the unique character such that $\psi(e_i) = 0$ if $i = n_1 + \cdots + n_j$ for some $j$, and $\psi(e_i) = 1$ otherwise. The action of $U$ on $SL_n$ induces a moment map $T^* SL_n \to \fu^*$, and associated to $\psi$ we have the Hamiltonian reduction 
$$ T^* SL_n \sslash_\psi U := (T^*SL_n \times_{\fu^*} {\psi}) / U. $$

\begin{Theorem}\label{thm:Macresult}
	\cite[Thm. 1.5.2]{Mac23} There is a ring isomorphism $H^\bullet i^!j^*(\cA_{reg})|_{\hbar = 0} \cong \C[T^*SL_n \sslash_{\psi} U]$ .
\end{Theorem}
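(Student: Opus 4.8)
The plan is to prove the isomorphism directly, by computing the left-hand side $\pt_* i^! j^*(\cA_{reg})|_{\hbar = 0}$ as an equivariant cohomology ring and matching it with the reduction, thereby generalizing the $L = T$ computation of Ginzburg--Riche \cite{GR15} (Theorem \ref{thm:GRresult}) rather than routing through the Coulomb branch, which would be circular with Theorem \ref{thm:levithmintro}. First I would unwind the functor: by base change $\pt_* i^! j^*(\cA_{reg})$ is the $\hL_\cO$-equivariant cohomology of the $!$-restriction of $\cA_{reg}$ to $\Gr_L \subset \Gr_G$, and setting $\hbar = 0$ forgets the loop-rotation equivariance, leaving plain $L_\cO$-equivariant (Borel--Moore) homology. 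Using the Peter--Weyl decomposition $\C[SL_n] \cong \bigoplus_\lambda V_\lambda \otimes V_\lambda^*$ and geometric Satake to write $\cA_{reg} \cong \bigoplus_\lambda \mathrm{IC}_\lambda \otimes V_\lambda^*$, this reduces the problem to computing $\bigoplus_\lambda H^\bullet_{L_\cO}(\Gr_L, i^! j^*\mathrm{IC}_\lambda) \otimes V_\lambda^*$ together with the ring structure induced by convolution.

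The core step is to identify the geometric meaning of $i^! j^*$ and of the subsequent $L_\cO$-equivariant cohomology. I would relate $i^! j^*$, via Braden's hyperbolic localization, to the geometric constant-term functor for the parabolic $P = L U_P$ with Levi $L$; by the compatibility of geometric Satake with constant terms (Beilinson--Drinfeld, Mirković--Vilonen) this categorifies parabolic restriction $\Res^{G^\vee}_{L^\vee}$, so that $\cA_{reg}$ restricts to the ring object carrying $\C[G^\vee]$ with its residual $L^\vee$-structure. Taking $L_\cO$-equivariant cohomology then realizes, through the Bezrukavnikov--Finkelberg derived-Satake dictionary, the Kostant--Whittaker reduction on the $L^\vee$-side (the extreme case $L = G$ of which is the known Whittaker description \cite[Sec.~7.1]{DG19}, \cite[Sec.~5(ii)]{BFN19}). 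Tracing both inputs, the net effect is to reduce $T^* G^\vee = T^* SL_n$ by $U$ nondegenerately along the nilradical $\fu_L = \fu \cap \fl$ of the Borel of $L$ and trivially along the complementary nilradical $\fu_P$. Since this prescription is exactly the character $\psi$ --- which is $1$ on the intra-block simple roots lying in $\fl$ and $0$ on the inter-block roots generating $\fu_P$ --- the result is $\C[T^* SL_n \sslash_\psi U]$.

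The hard part is this final identification: verifying that the two geometric operations (constant term along $U_P$, equivariant cohomology along $L$) assemble into precisely the Hamiltonian reduction at $\psi$, matching not merely the $P^\vee$-graded vector space but the ring structure (convolution $\leftrightarrow$ multiplication) and the $\C[\ft]^{W_L}$-algebra structure, and controlling the passage to global functions, since $SL_n \times^U (\psi + \fu^\perp)$ need not be affine. Two consistency checks anchor the computation: at $L = T$ one has $\fu_L = 0$ and $\psi = 0$, recovering the classical limit of Theorem \ref{thm:GRresult}; at $L = G$ one has $\fu_P = 0$ and $\psi = \psi_W$, recovering the Kostant--Whittaker case. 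Carrying out the interpolating Levi cases uniformly --- most concretely by analyzing the $L_\cO$-orbit stratification of $\Gr_L$ and the MV-type cycles controlling $i^! j^* \mathrm{IC}_\lambda$, as in the Ginzburg--Riche argument --- is where the substantive work lies.
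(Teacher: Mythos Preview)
The paper does not prove this statement at all: Theorem~\ref{thm:Macresult} is quoted as an external input, cited verbatim from \cite[Thm.~1.5.2]{Mac23}, and is then invoked as a black box in the one-line proof of Theorem~\ref{thm:levithmintro}. So there is no ``paper's own proof'' to compare your proposal against.

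Your outline is a plausible sketch of how such a result is established (and is in the spirit of the Ginzburg--Riche and Macerato arguments): constant-term/hyperbolic-localization to pass from $G$ to $L$, derived Satake for $L$ to interpret $L_\cO$-equivariant cohomology as Kostant--Whittaker reduction on the $L^\vee$ side, and then assembling the two reductions into the single $U$-reduction at the character $\psi$. You are also right that routing through the Coulomb branch would be circular here. But as far as this paper is concerned, all of that belongs to \cite{Mac23}; the correct ``proof'' in the context of this paper is simply a citation.
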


\begin{proof}[Proof of Theorem \ref{thm:levithmintro}]
	The same as the proof of Theorem \ref{thm:mainthmintro}, except we use Theorem \ref{thm:Macresult} in place of Theorem \ref{thm:GRresult}. 
\end{proof}

We conclude with remarks on potential extensions of our main results.

\begin{Remark}
The variety $T^*SL_n \sslash_{\psi} U$ admits an action of $\cM(L, 0)$, see \cite[Sec. 3.2]{Mac23}. Here, we use the identification of $\cM(L, 0)$ with the group scheme of regular centralizers for~$L^{\vee}$ \cite{BFM05}. On the other hand, the Coulomb branch $\cM_C(Q_{\vec{n}})$ admits an action of $\cM(\wL, 0)$, see e.g. \cite[Thm. 1(iv)]{Tel22}. We intend to compare these constructions in future work. 
\end{Remark}

\begin{Remark}\label{Bonus Expectations of Theorem} We expect the following additional compatibilities of Theorem \ref{thm:mainthmintro}. 
\begin{enumerate}
\item Recall that, as explained in \cite[Section 8]{DHK21}, the \lq leftmost\rq{} $n - 1$ nodes of the quiver $Q_{n}$ are \textit{balanced} in the sense of e.g. \cite[Section A(ii)]{BFN19}. %and the remaining nodes are balanced if and only if $n = 3$. 
Therefore, the construction of \cite[Section A(ii)]{BFN19} implies that there is a natural Lie algebra embedding $\mathfrak{sl}_n \subseteq \C_\hbar[\cM(Q_n)]$ for all $n \geq 1$. We expect that, under the isomorphism of \cref{thm:mainthmintro}, this subalgebra coincides with the Lie subalgebra $\mathfrak{sl}_n \subseteq D_\hbar(SL_n/U)$ of vector fields induced from the $SL_n$-action on $SL_n/U$. %This Lie algebra embedding induces a $\mathbb{C}[\hbar]$-algebra homomorphism $\mu': U_{\hbar}(\mathfrak{sl}_n)\to \C_\hbar[\cM(Q_n)]$ from the asymptotic enveloping algebra $U_{\hbar}(\mathfrak{sl}_n)$ of $\mathfrak{sl}_n$. We expect that the image of $\mu'$ coincides under Theorem \ref{thm:mainthmintro} with the comoment map $\mu: U_{\hbar}(\mathfrak{sl}_n) \to D_\hbar(SL_n/U)$ induced from the $SL_n$-action on $SL_n/U$. 
\item Recall that the \textit{Gelfand-Tsetlin subalgebra} of $U_{\hbar}(\mathfrak{gl}_n)$ is the commutative subalgebra $R_n$ of $U_{\hbar}(\mathfrak{gl}_n)$ generated by $Z(U_{\hbar}(\mathfrak{gl}_p))$ for $p \in \{1, ..., n\}$ where we view $\mathfrak{gl}_p \subseteq \mathfrak{gl}_n$ by the embedding sending a $p \times p$ matrix $M$ to the $n \times n$ matrix whose $(i, j)$-entry is that of $M$ if both $i \leq p$ and $j \leq p$ and is zero otherwise. Let $R_n' := R_n \cap U_{\hbar}(\mathfrak{sl}_n)$. We expect that the canonical quantum integrable system of \labelcref{ComAlg} corresponds under Theorem \ref{thm:mainthmintro} to the composite \[R_n' \otimes_{Z(U_{\hbar}(\mathfrak{sl}_n))} U_{\hbar}(\mathfrak{t}) \subseteq U_{\hbar}(\mathfrak{sl}_n) \otimes_{Z(U_{\hbar}(\mathfrak{sl}_n))} U_{\hbar}(\mathfrak{t}) \to D_\hbar(SL_n/U)\] of the inclusion of $R_n'$ and the comoment map for the action of $SL_n \times T$ on $SL_n/U$. 
\end{enumerate}
\end{Remark}

\begin{Remark}
We let $B$ denote the subgroup of $\SL_n$ of upper triangular matrices of determinant 1, and let $\mathfrak{b}$ denote its Lie algebra, which may be equivalently described as the set of traceless upper triangular matrices. We define a left action of $U$ on $B$ by the formula $u \cdot b := ubu^{-1}$ and a right action of $U$ on $\SL_n$ through right multiplication, or in other words by the formula $g \cdot u := gu$. With these actions, we may form the balanced product $X_n := \SL_n \times^U B$, the variety informally obtained from $\SL_n \times B$ by identifying $(gu, b) \sim (g, ubu^{-1})$. The variety $X_n$ is quasi-affine (for example, by essentially identical arguments to that of \cite[Corollary 5.6]{GannonAProofofGinzburgKazhdanConjecture}) and so the scheme %do not say variety here!
 \[\overline{X_n} := \mathrm{Spec}(\C[\SL_n \times^U B]) \cong \mathrm{Spec}(\C[\SL_n \times B]^U)\] contains $X_n$ as an open subscheme. We conjecture that the $K$-theoretic Coulomb branch of the quiver $Q_n$ with gauge group $\tilde{T}$ is isomorphic to $\overline{X_n}$.\footnote{This conjecture has also been independently obtained by Ana B\u alibanu.}

As some motivation for this conjecture we recall that there is an $\SL_n$-equivariant isomorphism \begin{equation}\label{Cotangent Bundle is Quotient of bBundle}T^*(\SL_n/U) \cong \SL_n \times^U \mathfrak{b}\end{equation} induced by a choice of nondegenerate $\SL_n$-invariant bilinear form $\kappa$ on $\mathfrak{sl}_n$, such as the Killing form. Indeed, there is a (well known) $\SL_n$-equivariant isomorphism \begin{equation}\label{Iso One}\SL_n \times^U (\mathfrak{g}/\mathfrak{u})^* \xrightarrow{\sim} T^*(\SL_n/U)\end{equation} induced from the $\SL_n$-equivariance of the tangent sheaf. Moreover, since $\kappa$ induces an isomorphism $\mathfrak{sl}_n \xrightarrow{\sim} \mathfrak{sl}_n^*$, it is not difficult to prove (see for example \cite[Lemma 2.3.4]{RicheKostantSection}) that there is an induced $B$-equivariant isomorphism $\mathfrak{b} \xrightarrow{\sim} (\mathfrak{sl}_n/\mathfrak{u})^*$. We therefore obtain an isomorphism \begin{equation}\label{Iso Two}\SL_n \times^U \mathfrak{b} \xrightarrow{\sim} \SL_n \times^U (\mathfrak{sl}_n/\mathfrak{u})^* \end{equation} which is manifestly $\SL_n$-equivariant. Combining \labelcref{Iso One} and \labelcref{Iso Two} we recover the isomorphism \labelcref{Cotangent Bundle is Quotient of bBundle}. Therefore, our conjecture on $\overline{X_n}$ can be regarded as a \lq multiplicative\rq{} variant of the semiclassical version of \cref{thm:mainthmintro}.%Recall that the $G$-equivariance of the tangent sheaf on $G/U$ gives rise to an isomorphism $T^*(G/U) \cong G \times^U (\mathfrak{g}/\mathfrak{u})^*$. 

If $B' \subseteq \GL_n$ denotes the subgroup of all upper triangular matrices, we moreover conjecture that the affine closure \[\overline{Y_n} := \mathrm{Spec}(\C[\GL_n \times^U B']) \cong \mathrm{Spec}(\C[\GL_n \times B']^U)\] of the quasi-affine variety $Y_n := \GL_n \times^U B'$ is isomorphic to the $K$-theoretic Coulomb branch of the quiver $Q_n$ and gauge group $\GL_1^n \times \prod_{i = 1}^{n - 1} \GL_i .$
\end{Remark}

\begin{Remark}\label{rem:BFNnilconecomparison}
	Suppose we replace $L$ with the trivial subgroup $1 \subset G$ in (\ref{eq:Tgroupsquare}). Then the subgroup $\wL$ becomes the kernel $\wone := \ker(\wG \to G)$, the map $i$ becomes the inclusion of the identity point into $\Gr_G$, and the map $j$ becomes the quotient $\Gr_G \to {}_G \Gr_G$. The pair $(\wone,N)$ corresponds to the framed quiver 
	\begin{equation*}
		\begin{tikzpicture}[baseline=(current bounding box.center), thick, >=Stealth]
			% Draw vertices
			\node[circle, draw, minimum size=30] (1) at (0,0) {$1$};
			\node[circle, draw, minimum size=30] (2) at (2,0) {$2$};
			%\node (dots) at (4,0) {$\dots$};
			\fill (3.8,0) circle (.03);
			\fill (4,0) circle (.03);
			\fill (4.2,0) circle (.03);
			\node[circle, draw, minimum size=30] (n-1) at (6,0) {$n\! -\! 1$};
			\node [rectangle, draw, minimum size=30] (v1) at (8,0) {$n$};
			
			% Draw edges
			\draw (2) -- (1);
			\draw (2) -- (3.6,0);
			\draw (n-1) -- (4.4,0);
			\draw (n-1) -- (v1);
		\end{tikzpicture}
	\end{equation*}
	 By \cite[Thm. 7.3.1]{ABG04}, the ring $i^! j^*(\cA_{reg})$ is now isomorphic to the coordinate ring $\C[\cN]$ of the nilpotent cone of $SL_n$. Using this in place of the results of \cite{GR15} and \cite{Mac23}, the proofs above now specialize to the proof implicit in \cite[Sec. 2(iv)]{BFN19} that there is an isomorphism $\cM_{\wone,N} \cong \cN$ (such an isomorphism was already established by different means in \cite{BFN19b}). In particular, Propositions \ref{prop:algiso} and \ref{prop:!restrict} function in the proofs above as a generalization of \cite[Lem. 2.9]{BFN19} to the case where the inclusion of the trivial subgroup is replaced by a general homomorphism $L \to G$. The interpretation of the Springer resolution in \cite[Sec. 2(vi)]{BFN19} is yet another variation on this theme, with a suitable subset $\bN \subset \Gr_T$ in the role played above by ${}_T \Gr_T$ and the identity point. 
\end{Remark}

\bibliographystyle{amsalpha}
\bibliography{bibfile}

\end{document}